\numberwithin{equation}{section}
\numberwithin{figure}{section}
\theoremstyle{plain}
\newtheorem{thm}{\protect\theoremname}
  \theoremstyle{remark}
  \newtheorem{rem}[thm]{\protect\remarkname}
  \theoremstyle{plain}
  \newtheorem{lem}[thm]{\protect\lemmaname}
  \theoremstyle{definition}
  \newtheorem{defn}[thm]{\protect\definitionname}
\newenvironment{lyxlist}[1]
{\begin{list}{}
{\settowidth{\labelwidth}{#1}
 \setlength{\leftmargin}{\labelwidth}
 \addtolength{\leftmargin}{\labelsep}
 }}
{\end{list}}
  \theoremstyle{plain}
  \newtheorem{prop}[thm]{\protect\propositionname}
\newcommand{\xyR}[1]{\xydef@\xymatrixrowsep@{#1}}
\newcommand{\xyC}[1]{\xydef@\xymatrixcolsep@{#1}}
\newcommand{\Min}{\mathrm{Min}}
\newcommand{\Fi}{\mathrm{Fil}}
\newcommand{\Gr}{\mathrm{Gr}}
\newcommand{\Frac}{\mathrm{Frac}}
\newcommand{\Aut}{\mathrm{Aut}}
\newcommand{\Gal}{\mathrm{Gal}}
\newcommand{\Hom}{\mathrm{Hom}}
\newcommand{\Gra}{\mathsf{Gr}}
\newcommand{\Fil}{\mathsf{Fil}}
\newcommand{\Rep}{\mathsf{Rep}}
\newcommand{\Vect}{\mathsf{Vect}}
\newcommand{\Iso}{\mathsf{Iso}}
  \providecommand{\definitionname}{Definition}
  \providecommand{\lemmaname}{Lemma}
  \providecommand{\propositionname}{Proposition}
  \providecommand{\remarkname}{Remark}
\providecommand{\theoremname}{Theorem}
\begin{document}

\title{Mazur's inequality and Laffaille's theorem}

\author{Christophe Cornut}
\begin{abstract}
We look at various questions related to filtrations in $p$-adic Hodge
theory, using a blend of building and Tannakian tools. Specifically,
Fontaine and Rapoport used a theorem of Laffaille on filtered isocrystals
to establish a converse of Mazur's inequality for isocrystals. We
generalize both results to the setting of (filtered) $G$-isocrystals
and also establish an analog of Totaro's $\otimes$-product theorem
for the Harder-Narasimhan filtration of Fargues.
\end{abstract}

\keywords{Filtered $G$-isocrystals, Mazur's inequality, Harder-Narasimhan
filtrations and tensor products.}

\subjclass[2000]{$14F30$, $20G25$.}

\address{CNRS, Sorbonne Université, Université Paris Diderot, Institut de
Mathématiques de Jussieu-Paris Rive Gauche, IMJ-PRG, F-75005, Paris,
France.}

\email{\href{mailto:christophe.cornut@imj-prg.fr}{christophe.cornut@imj-prg.fr}}

\urladdr{\href{https://www.imj-prg.fr/~christophe.cornut/}{https://www.imj-prg.fr/$\sim$christophe.cornut/}}
\maketitle

\section{Introduction}

Many spaces that show up in $p$-adic Hodge theory are closely related
to buildings, if only because they involve filtrations and lattices
(or norms), which are respectively classified by Tits and Bruhat-Tits
buildings. Quite naturally, building theoretical tools have thus occasionally
been used to study these spaces, as in~\cite{RaZi02}, \cite{VoWe11}
or \cite{ChVi18}. In continuation with \cite{CoNi16}, this paper
tries to promote the idea that a more systematic use of buildings
sometimes leads to streamlined proofs and new results in $p$-adic
Hodge theory, and certainly already provides an enlightening new point
of view on various classical notions, simply by recasting them in
a metric geometry framework. 

For instance in \cite{CoNi16}, we showed that the Newton decomposition
of an isocrystal $(V,\varphi)$ over an algebraically closed residue
field $k$ of characteristic $p>0$ can be identified with the translation
vector of $\varphi$, viewing $\varphi$ as a semi-simple isometry
of the extended Bruhat-Tits building of $G=GL(V)$ \textendash{} this
still holds when $k$ is merely perfect. Filtrations $\mathcal{F}$
on $V$ come into the picture in the metric guise of non-expanding
maps on the building, and the weak admissibility of a filtered isocrystal
$(V,\varphi,\mathcal{F})$ is related to the joint dynamic of the
$(\varphi,\mathcal{F})$-actions on the building: this was somehow
first observed by Laffaille \cite{La00}, who showed that $(V,\varphi,\mathcal{F})$
is weakly admissible if and only if $V$ contains a strongly divisible
lattice, giving rise to a special point of the building fixed by the
composition of $\varphi$ and $\mathcal{F}$. This criterion for weak
admissibility implies that $V$ contains a lattice $L$ such that
$L$ and $\varphi L$ are in a given relative position $\mu$ if and
only if there is a weakly admissible filtration $\mathcal{F}$ of
type $\mu$ on $V$. Assuming that the residue field $k$ is algebraically
closed, Fontaine and Rapoport \cite{FoRa05} gave a criterion for
the existence of such an $\mathcal{F}$, thereby establishing a converse
to the already known necessary condition for the existence of $L$
\textendash{} Mazur's inequality. Our first target is the generalization
of these results to more general reductive groups $G$ (Theorem~\ref{thm:Main1}
and \ref{thm:Main2MazurRec}). Note that the converse of Mazur's inequality
for unramified groups was already established by \cite{Ga10}, following
an entirely different strategy suggested by \cite{Ko03}. Our method
yields a somewhat weaker result: we say nothing about the existence
of \emph{hyperspecial }(strongly divisible) points.

Here is a rough dictionary that may help the reader digest the previous
paragraph and warm-up for the sequel. Norms on $V$ correspond to
points on the (extended) Bruhat-Tits building $X$ of $G$, and lattices
correspond to a $G$-orbit $X^{\circ}$ of hyperspecial points in
$X$. There is a canonical $G$-equivariant distance $d$ on $X$
with nice convexity properties: $(X,d)$ is a complete CAT(0)-space,
of which $X^{\circ}$ is a closed discrete subset. The $\mathbb{R}$-graduations
$\mathcal{G}$ and (non-increasing) $\mathbb{R}$-filtrations $\mathcal{F}$
on $V$ correspond to asymptotic classes of, respectively, (constant
speed) geodesic lines $\ell:\mathbb{R}\rightarrow X$, and geodesic
rays $c:\mathbb{R}_{+}\rightarrow X$. In particular, $\mathbb{R}$-filtrations
act on $X$ by non-expanding maps as follows: $x+\mathcal{F}=c(1)$,
where $c$ is the unique geodesic ray in $\mathcal{F}$ emanating
from $x\in X$. If $x\in X^{\circ}$ corresponds to a lattice $L$
in $V$ and $\mathcal{F}$ is a $\mathbb{Z}$-filtration, then $x+\mathcal{F}$
also belongs to $X^{\circ}$ and corresponds to the lattice
\[
L+\mathcal{F}=\sum_{i\in\mathbb{Z}}p^{-i}L\cap\mathcal{F}^{i}.
\]
The $G$-orbits of $\mathbb{R}$-filtrations on $V$ are classified
by their type $t(\mathcal{F})\in\mathbb{R}_{\geq}^{r}$, where 
\[
\mathbb{R}_{\geq}^{r}=\{(x_{1},\cdots,x_{r}):x_{1}\geq\cdots\geq x_{r}\}
\]
is usually identified with the set of concave polygons over $[0,r]$
whose brake points have integral $x$-coordinate. We equip $\mathbb{R}_{\geq}^{r}$
with its usual partial order, given by 
\[
(x_{1},\cdots,x_{r})\leq(x_{1}^{\prime},\cdots,x_{r}^{\prime})\iff\begin{cases}
\sum_{i=1}^{j}x_{i}\leq\sum_{i=1}^{j}x_{i}^{\prime} & \forall j\in\{1,\cdots,r-1\}\\
\sum_{i=1}^{r}x_{i}=\sum_{i=1}^{r}x_{i}^{\prime}.
\end{cases}
\]
We set $\mathbb{Q}_{\geq}^{r}=\mathbb{R}_{\geq}^{r}\cap\mathbb{Q}^{r}$
and $\mathbb{Z}_{\geq}^{r}=\mathbb{R}_{\geq}^{r}\cap\mathbb{Z}^{r}$.
The formula $\mathbf{d}(x,x+\mathcal{F})=t(\mathcal{F})$ defines
a $G$-invariant vector-valued distance $\mathbf{d}:X\times X\rightarrow\mathbb{R}_{\geq}^{r}$,
whose composition with the standard euclidean norm on $\mathbb{R}_{\geq}^{r}\subset\mathbb{R}^{r}$
is equal to the canonical distance $d$ of $X$. The restriction of
$\mathbf{d}$ to $X^{\circ}$ yields a bijection $G\backslash\left(X^{\circ}\times X^{\circ}\right)\simeq\mathbb{Z}_{\geq}^{r}$,
which is the usual invariant describing the relative position of two
lattices in $V$. 

The Frobenius $\varphi$ of $V$ induces an isometry $\varphi$ of
$(X,d)$ which preserves $X^{\circ}$. It is a semi-simple isometry,
which means that if $\min(\varphi)=\inf\left\{ d(\varphi x,x):x\in X\right\} $,
then 
\[
\Min(\varphi)=\left\{ x\in X:d(\varphi x,x)=\min(\varphi)\right\} 
\]
is \emph{non-empty. }This is a\emph{ }closed, convex, $\varphi$-stable
subset of $X$, equal to the disjoint union of the $\varphi$-stable
geodesic lines of $X$. The common asymptotic class of these lines
corresponds to the Newton $\mathbb{Q}$-graduation $\mathcal{G}_{N}$
of $(V,\varphi)$. The induced pair of opposed Newton $\mathbb{Q}$-filtrations
\emph{$(\mathcal{F}_{N},\mathcal{F}_{N}^{\iota})$} on $V$ act on
$x\in\Min(\varphi)$ as follows: 
\[
x+\mathcal{F}_{N}=\varphi^{-1}(x)\quad\text{and}\quad x+\mathcal{F}_{N}^{\iota}=\varphi(x).
\]
In particular, the Newton type $t_{N}=t(\mathcal{F}_{N})\in\mathbb{Q}_{\geq}^{r}$
is equal to $\mathbf{d}(\varphi(x),x)$ for every $x\in\Min(\varphi)$,
and $\min(\varphi)=\left\Vert t_{N}\right\Vert $. Moreover, we have
\[
\Min(\varphi)=X_{\varphi}(t_{N})=X_{\varphi}(\mathcal{F}_{N})
\]
where for any $\mu\in\mathbb{R}_{\geq}^{r}$ and any $\mathbb{R}$-filtration
$\mathcal{F}$ on $V$ of type $t(\mathcal{F})=\mu$, 
\begin{align*}
X_{\varphi}(\mu) & =\left\{ x\in X:\mathbf{d}(\varphi x,x)=\mu\right\} ,\\
X_{\varphi}(\mathcal{F}) & =\left\{ x\in X:x=\varphi x+\mathcal{F}\right\} .
\end{align*}
By definition, $X_{\varphi}(\mu)$ is a closed $\Aut(V,\varphi)$-stable
subset of $X$, $X_{\varphi}(\mathcal{F})$ is a closed convex $\Aut(V,\varphi,\mathcal{F})$-stable
subset of $X$, and we have a covering 
\begin{equation}
X_{\varphi}(\mu)=\cup_{t(\mathcal{F})=\mu}X_{\varphi}(\mathcal{F}).\label{eq:CoveringAFFDEL}
\end{equation}
For $\mu\in\mathbb{Z}_{\geq}^{r}$ and any $\mathbb{Z}$-filtration
$\mathcal{F}$ on $V$ of type $t(\mathcal{F})=\mu$, we set
\[
X_{\varphi}^{\circ}(\mu)=X_{\varphi}(\mu)\cap X^{\circ}\quad\text{and}\quad X_{\varphi}^{\circ}(\mathcal{F})=X_{\varphi}(\mathcal{F})\cap X^{\circ}
\]
so that $X_{\varphi}^{\circ}(\mu)$ is stable under $\Aut(V,\varphi)$,
$X_{\varphi}^{\circ}(\mathcal{F})$ is stable under $\Aut(V,\varphi,\mathcal{F})$,
and 
\begin{equation}
X_{\varphi}^{\circ}(\mu)=\cup_{t(\mathcal{F})=\mu}X_{\varphi}^{\circ}(\mathcal{F}).\label{eq:CovAFFDELLHYP}
\end{equation}
The subsets $X_{\varphi}^{\circ}(\mu)$ are usually called affine
Deligne-Lusztig ``varieties'', and there is an extensive literature
about them (and various generalizations): they show up in the description
of special fibers of Shimura varieties, and indeed can be equipped
with some sort of algebraic structure over the relevant residue field.
The covering subsets $X_{\varphi}^{\circ}(\mathcal{F})$ are made
of strongly divisible lattices in the sense of \cite{Laf80,FoLa82},
and they are related to stable lattices in crystalline Galois representations,
but the covering (\ref{eq:CovAFFDELLHYP}) itself has not attracted
much attention, even though it was crucially used in \cite{FoRa05}
to obtain a converse to Mazur's inequality using Laffaille's criterion
for weak admissibility, as we shall now explain. 

In this simple setting where $G=GL(V)$, Mazur's inequality and Laffaille's
criterion can be stated as follows: for any $\mu\in\mathbb{Z}_{\geq}^{r}$
and any $\mathbb{Z}$-filtration $\mathcal{F}$ on $V$,
\begin{equation}
X_{\varphi}^{\circ}(\mu)\neq\emptyset\Longrightarrow\mu\geq t_{N}\text{ in }\mathbb{R}_{\geq}^{r},\label{eq:MazurIneq1}
\end{equation}
\begin{equation}
X_{\varphi}^{\circ}(\mathcal{F})\neq\emptyset\iff(V,\varphi,\mathcal{F})\text{ is (weakly) admissible.}\label{eq:LaffailleCrit1}
\end{equation}
Let $\mathbf{Adm}_{\varphi}(\mu)$ be the set of all (weakly) admissible
$\mathbb{Z}$-filtrations $\mathcal{F}$ of type $t(\mathcal{F})=\mu$
on $(V,\varphi)$. Using (\ref{eq:CovAFFDELLHYP}) and (\ref{eq:LaffailleCrit1}),
the converse of Mazur's inequality (\ref{eq:MazurIneq1}) becomes
\begin{equation}
\mu\geq t_{N}\text{ in }\mathbb{R}_{\geq}^{r}\Longrightarrow\mathbf{Adm}_{\varphi}(\mu)\neq\emptyset.\label{eq:FontRapp}
\end{equation}
This existence result is established by Fontaine and Rapoport in \cite{FoRa05}:
they show that any sufficiently generic $\mathbb{Z}$-filtration of
type $\mu$ with $\mu\geq t_{N}$ is weakly admissible. 

Forgetting hyperspecial points, let us now explain the results that
we will eventually generalize in the first part of this paper \textendash{}
from $GL(V)$ to an arbitrary but unramified reductive group $G$.
First, we have the following variants of Mazur's inequality and Laffaille's
criterion: for any $\mu\in\mathbb{R}_{\geq}^{r}$ and any $\mathbb{R}$-filtration
$\mathcal{F}$ on $V$,
\begin{equation}
X_{\varphi}(\mu)\neq\emptyset\Longrightarrow\mu\geq t_{N}\text{ in }\mathbb{R}_{\geq}^{r},\label{eq:MazurIneq2}
\end{equation}
\begin{equation}
X_{\varphi}(\mathcal{F})\neq\emptyset\iff(V,\varphi,\mathcal{F})\text{ is weakly admissible.}\label{eq:LaffailleCrit2}
\end{equation}
Note that weak admissibility makes perfect sense for $\mathbb{R}$-filtrations.
Let $\mathbf{Adm}_{\varphi}(\mu)$ be the set of all weakly admissible
$\mathbb{R}$-filtrations $\mathcal{F}$ of type $t(\mathcal{F})=\mu$
on $(V,\varphi)$. Using (\ref{eq:CoveringAFFDEL}) and (\ref{eq:LaffailleCrit2})
as above, the converse of Mazur's inequality (\ref{eq:MazurIneq2})
becomes
\begin{equation}
\mu\geq t_{N}\text{ in }\mathbb{R}_{\geq}^{r}\Longrightarrow\mathbf{Adm}_{\varphi}(\mu)\neq\emptyset.\label{eq:FontRapp2}
\end{equation}
As in \cite{FoRa05}, we establish the latter existence result by
showing that any sufficiently generic $\mathbb{R}$-filtration of
type $\mu$ with $\mu\geq t_{N}$ is weakly admissible, with a proof
now based on a mixture of algebraic geometry and building theoretical
tools.
\begin{rem}
The Fontaine-Rapoport method sketched above amounts to reduce a relatively
hard existence problem in a CAT(0)-space (the converse of (\ref{eq:MazurIneq1})
or (\ref{eq:MazurIneq2})) to an easier existence problem on the boundary
of that space ((\ref{eq:FontRapp}) or (\ref{eq:FontRapp2})), using
a suitable fixed point theorem ((\ref{eq:LaffailleCrit1}) or (\ref{eq:LaffailleCrit2})).
This line of thought has been used in other problems unrelated to
$p$-adic Hodge theory, see \cite{Ka06} or the appendix of \cite{Ku14}. 
\end{rem}
Various criteria for weak admissibility of unramified filtered $G$-isocrystals
are listed in section \ref{sec:Laffaille}, including our generalization
of Laffaille's criterion (the equivalence $(1)\iff(3)$ of Theorem~\ref{thm:Main1}).
Mazur's inequality is addressed in section~\ref{sec:Mazur} (Theorem~\ref{thm:Main2MazurRec}).
In the ramified case, we establish another list of criteria for weak
admissibility in section~\ref{sec:EquivForWeaklyAdm}, using a Harder-Narasimhan
filtration and its compatibility with tensor products. There are now
many proofs of this compatibility, which essentially says that the
tensor product of weakly admissible filtered isocrystals is weakly
admissible. In the unramified case, this was an immediate consequence
of Laffaille's criterion. Faltings \cite{Fa95} generalized Laffaille's
proof to the unramified case, and Totaro \cite{To96} found yet another
proof related to Geometric Invariant Theory. It turns out that Totaro's
proof has a very clean building theoretical translation. Instead of
just repeating his arguments in this smoother framework, we chose
to treat the slightly more difficult case of semi-stable weakly admissible
filtered isocrystals in section~\ref{sec:FarguesFilt} (Theorem~\ref{thm:TensorProductFargues}),
where semi-stability here refers to the notion introduced by Fargues
in connection with his own Harder-Narasimhan filtrations~\cite{Fa12}.
In particular, we prove that Fargues's filtration on weakly admissible
filtered isocrystals is compatible with tensor products (Theorem~\ref{thm:TensorProductFargues}),
and moreover show that it is given by a convex projection (Lemma~\ref{lem:CaractWeaklAdmFiltVectorCase}
and Proposition~\ref{prop:FarguesIsConvProj}): the algebraically
defined Fargues $\mathbb{Q}$-filtration $\mathcal{F}_{F}$ of a weakly
admissible filtered isocrystal $(V,\varphi,\mathcal{F})$ is the best
approximation (in a metric sense) of the Newton $\mathbb{Q}$-filtration
$\mathcal{F}_{N}^{\iota}$ of $(V,\varphi)$ by a filtration whose
steps are weakly admissible subspaces of $(V,\varphi,\mathcal{F})$.

In sections~\ref{sec:Laffaille} and \ref{sec:Mazur}, our filtered
isocrystals are unramified (i.e.~$K=K_{0}$ in standard notations).
In the remaining sections, they are defined with respect to an extension
$L$ of the fraction field $K$ of the Witt vectors $W(k)$, which
we do not require to be finite or unramified. The residue field $k$
is a perfect field of characteristic $p>0$, which is only required
to be algebraically closed in Theorem~\ref{thm:Main2MazurRec}. Throughout
the paper, we work with $\Gamma$-filtrations (defined in~\ref{subsec:GrFil}),
where $\Gamma$ is a non-trivial subgroup of $\mathbb{R}$. For applications
to $p$-adic Hodge theory, one should take $\Gamma=\mathbb{Z}$ for
Hodge filtrations and $\Gamma=\mathbb{Q}$ for Newton, Harder-Narasimhan
and Fargues filtrations. 

The need to properly identify $\mathbb{R}$-filtrations with non-expanding
operators on Bruhat-Tits buildings led us to write \cite{Co14}, which
was initially meant to be the first part of this paper, but eventually
grew way out of proportion. This will be our general background reference
for everything pertaining to $\Gamma$-graduations, $\Gamma$-filtrations
and their types. We similarly refer to \cite{Ko85,RaRi96} for $G$-isocrystals,
\cite{RaZi96,DaOrRa10} for $\Gamma$-filtered $G$-isocrystals, \cite{BrHa99}
for CAT(0)-spaces and \cite{Ti79,Co14} for Bruhat-Tits buildings.

\section{Unramified Filtered Isocrystals\label{sec:Laffaille}}

In this section, we first review various basic notions related to
graduations and filtrations~(\ref{subsec:GrFil}), isocrystals (\ref{subsec:Iso}),
and unramified filtered isocrystals (\ref{subsec:IsoFil}). We then
add $G$-structures to them using the Tannakian framework (\ref{subsec:FilIsoG}),
and gently shift from the latter to the building framework (\ref{subsec:FilIsoGRep}-\ref{subsec:BuildGL})
using \cite{Co14} for the translation. Our main result is Theorem~\ref{thm:Main1},
in particular the equivalence $(1)\iff(3)$ relating weak admissibility
of an unramified filtered $G$-isocrystal $(\varphi,\mathcal{F})$
to the joint dynamic of the induced operators $\varphi$ and $\mathcal{F}$
on the extended Bruhat-Tits building of $G$. 

\subsection{\label{subsec:GrFil}~}

Let $K$ be a field, $\Gamma\neq0$ a subgroup of $\mathbb{R}$, $V$
a finite dimensional $K$-vector space. A $\Gamma$-graduation on
$V$ is a collection of $K$-subspaces $\mathcal{G}=(\mathcal{G}_{\gamma})_{\gamma\in\Gamma}$
such that $V=\oplus_{\gamma\in\Gamma}\mathcal{G}_{\gamma}$. A $\Gamma$-filtration
on $V$ is a collection of $K$-subspaces $\mathcal{F}=(\mathcal{F}^{\gamma})_{\gamma\in\Gamma}$
for which there exists a $\Gamma$-graduation $\mathcal{G}$ such
that $\mathcal{F}=\Fi(\mathcal{G})$, i.e.~$\mathcal{F}^{\gamma}=\oplus_{\eta\geq\gamma}\mathcal{G}_{\eta}$
for every $\gamma\in\Gamma$. We call any such $\mathcal{G}$ a splitting
of $\mathcal{F}$. If $W$ is a $K$-subspace of $V$, then $\mathcal{F}\vert W=(\mathcal{F}^{\gamma}\cap W)_{\gamma\in\Gamma}$
is again a $\Gamma$-graduation on $W$. The degree of $\mathcal{F}$
is given by
\[
\deg(\mathcal{F})=\sum_{\gamma}\dim_{K}\left(\Gr_{\mathcal{F}}^{\gamma}\right)\cdot\gamma
\]
where $\Gr_{\mathcal{F}}^{\gamma}=\mathcal{F}^{\gamma}/\mathcal{F}_{+}^{\gamma}$
with $\mathcal{F}_{+}^{\gamma}=\cup_{\eta>\gamma}\mathcal{F}^{\eta}$
for every $\gamma\in\Gamma$. The scalar product of two $\Gamma$-filtrations
$\mathcal{F}_{1}$ and $\mathcal{F}_{2}$ on $V$ is defined by the
analogous formula
\begin{eqnarray*}
\left\langle \mathcal{F}_{1},\mathcal{F}_{2}\right\rangle  & = & \sum_{\gamma_{1},\gamma_{2}}\dim_{K}\left(\frac{\mathcal{F}_{1}^{\gamma_{1}}\cap\mathcal{F}_{2}^{\gamma_{2}}}{\mathcal{F}_{1,+}^{\gamma_{1}}\cap\mathcal{F}_{2}^{\gamma_{2}}+\mathcal{F}_{1}^{\gamma_{1}}\cap\mathcal{F}_{2,+}^{\gamma_{2}}}\right)\cdot\gamma_{1}\gamma_{2}\\
 & = & \sum_{\gamma}\gamma\cdot\deg\Gr_{\mathcal{F}_{1}}^{\gamma}(\mathcal{F}_{2})=\sum_{\gamma}\gamma\cdot\deg\Gr_{\mathcal{F}_{2}}^{\gamma}(\mathcal{F}_{1})
\end{eqnarray*}
where $\Gr_{\mathcal{F}_{i}}^{\gamma}(\mathcal{F}_{j})$ is the $\Gamma$-filtration
induced by $\mathcal{F}_{j}$ on $\Gr_{\mathcal{F}_{i}}^{\gamma}$.
We denote by $\Gra^{\Gamma}(K)$ and $\Fil^{\Gamma}(K)$ the categories
of $\Gamma$-graded and $\Gamma$-filtered finite dimensional $K$-vector
spaces, and equip them with their usual structure of exact $K$-linear
$\otimes$-categories. The $\Fi$ and $\Gr$-constructions yield exact
$K$-linear $\otimes$-functors
\[
\Fi:\Gra^{\Gamma}(K)\leftrightarrow\Fil^{\Gamma}(K):\Gr.
\]
The formula $(\iota\mathcal{G})_{\gamma}=\mathcal{G}_{-\gamma}$ defines
an involutive exact $\otimes$-endofunctor of $\Gra^{\Gamma}(K)$.

\subsection{~\label{subsec:Iso}}

Let $k$ be a perfect field of characteristic $p>0$, $W(k)$ the
ring of Witt vectors, $K=W(k)[\frac{1}{p}]$ its fraction field, $\sigma$
the Frobenius of $k$, $W(k)$ or $K$. An isocrystal over $k$ is
a finite dimensional $K$-vector space $V$ equipped with a $\sigma$-linear
isomorphism $\varphi$. Since $k$ is perfect, there is a canonical
slope decomposition $(V,\varphi)=\oplus_{\lambda\in\mathbb{Q}}(V_{\lambda},\varphi\vert V_{\lambda})$.
The $\varphi$-stable $K$-subspace $V_{\lambda}$ is the union of
all finitely generated $W(k)$-submodules $L$ of $V$ such that $\varphi^{(h)}(L)=p^{d}L$,
where $\lambda=\frac{d}{h}$ with $(d,h)\in\mathbb{Z}\times\mathbb{N}$.
We denote by $\Iso(k)$ the category of isocrystals over $k$. It
is a $\mathbb{Q}_{p}$-linear (non-neutral) Tannakian category and
the above slope decomposition yields an exact $\mathbb{Q}_{p}$-linear
$\otimes$-functor 
\[
\nu:\Iso(k)\rightarrow\Gra^{\mathbb{Q}}(K).
\]

\subsection{~\label{subsec:IsoFil}}

A $\Gamma$-filtered isocrystal over $k$ is an isocrystal $(V,\varphi)$
together with a $\Gamma$-filtration $\mathcal{F}$ on $V$. The $K$-vector
space $V$ thus carries three filtrations: the \emph{Hodge }$\Gamma$-filtration
$\mathcal{F}_{H}=\mathcal{F}$ and the pair of opposed $\varphi$-stable
\emph{Newton }$\mathbb{Q}$-filtrations $\mathcal{F}_{N}=\Fil(\mathcal{G}_{N})$
and $\mathcal{F}_{N}^{\iota}=\Fil(\iota\mathcal{G}_{N})$ attached
to the slope decomposition $\mathcal{G}_{N}=\nu(V,\varphi)$, given
by
\[
\mathcal{F}_{N}^{\lambda}=\oplus_{\eta\geq\lambda}V_{\eta}\quad\mbox{and}\quad\mathcal{F}_{N}^{\iota\lambda}=\oplus_{\eta\geq\lambda}V_{-\eta}.
\]
For $\Gamma=\mathbb{Z}$, these objects are usually called unramified
filtered isocrystals over $k$. 
\begin{lem}
\label{lem:DefWeakAdmEquScProd}The following conditions are equivalent:

\begin{enumerate}
\item $(V,\varphi,\mathcal{F})$ is weakly admissible, i.e.: 

\begin{enumerate}
\item $\deg(\mathcal{F}_{H})=\deg(\mathcal{F}_{N})$ and 
\item $\deg(\mathcal{F}_{H}\vert W)\leq\deg(\mathcal{F}_{N}\vert W)$ for
every $\varphi$-stable $K$-subspace $W$ of $V$, 
\end{enumerate}
\item For every $\varphi$-stable $\Delta$-filtration $\Xi$ on $V$, $\left\langle \mathcal{F}_{H},\Xi\right\rangle \leq\left\langle \mathcal{F}_{N},\Xi\right\rangle $.
\item For every $\varphi$-stable $\Delta$-filtration $\Xi$ on $V$, $\left\langle \mathcal{F}_{H},\Xi\right\rangle +\left\langle \mathcal{F}_{N}^{\iota},\Xi\right\rangle \leq0$.
\end{enumerate}
In $(2)$ and $(3)$, $\Delta$ is any non-trivial subgroup of $\mathbb{R}$. 
\end{lem}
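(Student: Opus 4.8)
The plan is to establish $(1)\Leftrightarrow(2)$ by a direct comparison of degrees, and then to deduce $(2)\Leftrightarrow(3)$ from the antisymmetry of the scalar product. Throughout I would use both expressions for $\langle\cdot,\cdot\rangle$ recorded above --- the defining double sum and the identity $\langle\mathcal{F},\Xi\rangle=\sum_{\gamma}\gamma\cdot\deg\Gr_{\Xi}^{\gamma}(\mathcal{F})$ --- together with the additivity of $\deg$ on short exact sequences of $\Gamma$-filtered $K$-vector spaces. The first step is a combinatorial reformulation: a $\varphi$-stable $\Delta$-filtration $\Xi$ on $V$ is the same as a flag $0=V^{(0)}\subsetneq V^{(1)}\subsetneq\cdots\subsetneq V^{(m)}=V$ of $\varphi$-stable $K$-subspaces together with jumps $\mu_{1}>\cdots>\mu_{m}$ in $\Delta$, so that $\Gr_{\Xi}^{\mu_{i}}=V^{(i)}/V^{(i-1)}$. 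Setting $d_{i}(\mathcal{F})=\deg(\mathcal{F}\vert V^{(i)})$, so that $\deg\Gr_{\Xi}^{\mu_{i}}(\mathcal{F})=d_{i}(\mathcal{F})-d_{i-1}(\mathcal{F})$ by additivity, a summation by parts yields
\[
\langle\mathcal{F},\Xi\rangle=\sum_{i=1}^{m-1}(\mu_{i}-\mu_{i+1})\,d_{i}(\mathcal{F})+\mu_{m}\deg(\mathcal{F}),
\]
in which every coefficient $\mu_{i}-\mu_{i+1}$ ($1\le i\le m-1$) is strictly positive.

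For $(1)\Rightarrow(2)$ I would apply this identity to $\mathcal{F}_{H}$ and to $\mathcal{F}_{N}$ and subtract: condition $(1)(a)$ kills the term $\mu_{m}(\deg\mathcal{F}_{H}-\deg\mathcal{F}_{N})$, while condition $(1)(b)$ applied to the $\varphi$-stable subspace $V^{(i)}$ gives $d_{i}(\mathcal{F}_{H})-d_{i}(\mathcal{F}_{N})\le0$; since the coefficients $\mu_{i}-\mu_{i+1}$ are positive, $\langle\mathcal{F}_{H},\Xi\rangle\le\langle\mathcal{F}_{N},\Xi\rangle$ follows. For $(2)\Rightarrow(1)$ I would test $(2)$ against well-chosen $\Xi$. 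Fix $\delta\in\Delta$ with $\delta>0$, which exists since $\Delta\ne0$. Taking for $\Xi$ the $\Delta$-filtration with a single jump at $\delta$, resp.\ at $-\delta$ (i.e.\ $\Xi=\Fi(\mathcal{G})$ with $\mathcal{G}$ placing all of $V$ in degree $\pm\delta$), the identity above gives $\langle\mathcal{F},\Xi\rangle=\pm\delta\deg(\mathcal{F})$, so $(2)$ forces $\deg\mathcal{F}_{H}=\deg\mathcal{F}_{N}$, which is $(1)(a)$. Then, given a $\varphi$-stable $W\subseteq V$, I would pick any $K$-linear complement $W'$ of $W$ in $V$ and let $\Xi$ be the $\Delta$-filtration split by the graduation with $W$ in degree $\delta$ and $W'$ in degree $0$; it is $\varphi$-stable because $W$ and $V$ are, and $\langle\mathcal{F},\Xi\rangle=\delta\deg(\mathcal{F}\vert W)$, so $(2)$ gives $\deg(\mathcal{F}_{H}\vert W)\le\deg(\mathcal{F}_{N}\vert W)$, which is $(1)(b)$.

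For $(2)\Leftrightarrow(3)$ it is enough to check that $\langle\mathcal{F}_{N}^{\iota},\Xi\rangle=-\langle\mathcal{F}_{N},\Xi\rangle$ for every $\varphi$-stable $\Delta$-filtration $\Xi$. Since $\mathcal{F}_{N}=\Fi(\mathcal{G}_{N})$ and $\mathcal{F}_{N}^{\iota}=\Fi(\iota\mathcal{G}_{N})$ with $(\iota\mathcal{G}_{N})_{\gamma}=(\mathcal{G}_{N})_{-\gamma}$, the $(\gamma_{1},\gamma_{2})$-bigraded piece occurring in the defining double sum for $\langle\mathcal{F}_{N}^{\iota},\Xi\rangle$ has the same dimension as the $(-\gamma_{1},\gamma_{2})$-bigraded piece occurring in the one for $\langle\mathcal{F}_{N},\Xi\rangle$, and the change of variable $\gamma_{1}\mapsto-\gamma_{1}$ then produces the sign. (This antisymmetry of $\langle\cdot,\cdot\rangle$ under $\iota$ belongs to the formalism of \cite{Co14}.) Hence the inequality in $(3)$ is exactly the one in $(2)$, and, as the above test filtrations exist for every non-trivial $\Delta\subseteq\mathbb{R}$, the whole equivalence is insensitive to the choice of $\Delta$.

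I do not expect a genuine obstacle here; the one point that deserves attention is the construction of the test filtrations in $(2)\Rightarrow(1)$. There one must keep in mind that realizing a prescribed $\varphi$-stable subspace $W$ as a step of a $\Delta$-filtration requires only a splitting of the flag $0\subset W\subset V$ at the level of $K$-vector spaces --- no splitting of $(V,\varphi)$ as an isocrystal is involved --- so that the $\varphi$-stability of $\Xi$ follows from that of $W$ and $V$ alone. Everything else is bookkeeping with the two formulas for $\langle\cdot,\cdot\rangle$ and the additivity of $\deg$.
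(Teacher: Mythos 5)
Your proof of $(1)\Leftrightarrow(2)$ is correct and follows the same route as the paper's: Abel summation over the jumps of $\Xi$ to get $(1)\Rightarrow(2)$, and explicit $\varphi$-stable test filtrations for the converse. The paper packages the tests into one family $\Xi_{W,a,b}$ whereas you separate the single-jump filtration (for $(1)(a)$) from the two-step flag through $W$ (for $(1)(b)$), but this is a cosmetic difference.

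The $(2)\Leftrightarrow(3)$ step has a genuine gap. You assert that the $(\gamma_1,\gamma_2)$-bigraded piece of $\langle\mathcal{F}_N^\iota,\Xi\rangle$ has the same dimension as the $(-\gamma_1,\gamma_2)$-bigraded piece of $\langle\mathcal{F}_N,\Xi\rangle$, and attribute this ``antisymmetry under $\iota$'' to the general formalism of \cite{Co14}. No such general fact exists: $\iota$ acts on graduations, not on filtrations, so $\mathcal{F}_N^\iota=\Fi(\iota\mathcal{G}_N)$ depends on the specific splitting $\mathcal{G}_N$, and the dimension count you want fails when $\Xi$ is in general position with respect to $\mathcal{G}_N$. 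Concretely, take $V=K^2$ with slope pieces $V_1=Ke_1$, $V_0=Ke_2$ (so $\mathcal{F}_N^1=Ke_1$ and $(\mathcal{F}_N^\iota)^0=Ke_2$), and a non-$\varphi$-stable $\Xi$ with $\Xi^1=K(e_1+e_2)$, $\Xi^0=V$: one computes $\langle\mathcal{F}_N,\Xi\rangle=0$ and $\langle\mathcal{F}_N^\iota,\Xi\rangle=-1$. The identity $\langle\mathcal{F}_N,\Xi\rangle+\langle\mathcal{F}_N^\iota,\Xi\rangle=0$ holds in the lemma only because $\Xi$ is $\varphi$-stable: then each $\Xi^\delta$ decomposes along the slope graduation by functoriality, which is precisely what makes your bigraded comparison go through. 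The paper makes this dependency explicit and avoids the double sum entirely: functoriality of the slope $\mathbb{Q}$-graduation gives $\deg(\mathcal{F}_N\vert W)+\deg(\mathcal{F}_N^\iota\vert W)=0$ for every $\varphi$-stable $W$, and feeding this into the Abel-summation formula you already wrote down yields the desired identity term by term. You have all the ingredients; the fix is to replace the appeal to a non-existent general antisymmetry by this explicit use of $\varphi$-stability.
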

\begin{proof}
For a $\varphi$-stable $K$-subspace $W$ of $V$ and $a,b,\delta$
in $\Delta$ with $a\leq b$, set 
\[
\Xi_{W,a,b}^{\delta}=\begin{cases}
V & \mbox{if }\delta\leq a,\\
W & \mbox{if }a<\delta\leq b,\\
0 & \mbox{if }b<\delta.
\end{cases}
\]
This defines a $\varphi$-stable $\Delta$-filtration $\Xi_{W,a,b}=(\Xi_{W,a,b}^{\delta})_{\delta\in\Delta}$
on $V$ with 
\begin{eqnarray*}
\left\langle \mathcal{F}_{H},\Xi_{W,a,b}\right\rangle  & = & a\cdot\deg(\mathcal{F}_{H})+(b-a)\cdot\deg(\mathcal{F}_{H}\vert W)\\
\mbox{and}\quad\left\langle \mathcal{F}_{N},\Xi_{W,a,b}\right\rangle  & = & a\cdot\deg(\mathcal{F}_{N})+(b-a)\cdot\deg(\mathcal{F}_{N}\vert W)
\end{eqnarray*}
Now $(2)$ implies that $\left\langle \mathcal{F}_{H},\Xi_{W,a,b}\right\rangle \leq\left\langle \mathcal{F}_{N},\Xi_{W,a,b}\right\rangle $
for every choice of $W$ and $a\leq b$, from which $(1)$ easily
follows. Conversely, let $\Xi$ be any $\varphi$-stable $\Delta$-filtration
on $V$. Write $\{\delta\in\Delta:\Gr_{\Xi}^{\delta}\neq0\}=\{\delta_{1}<\cdots<\delta_{n}\}$.
Then $\Xi^{\delta_{i}}$ is a $\varphi$-stable $K$-subspace of $V$.
Put $d_{H}(i)=\deg(\mathcal{F}_{H}\vert\Xi^{\delta_{i}})$ and $d_{N}(i)=\deg(\mathcal{F}_{N}\vert\Xi^{\delta_{i}})$.
Then 
\[
\left\langle \mathcal{F}_{H},\Xi\right\rangle =\sum_{i=1}^{n}\delta_{i}\cdot\Delta_{H}(i)\quad\mbox{and}\quad\left\langle \mathcal{F}_{N},\Xi\right\rangle =\sum_{i=1}^{n}\delta_{i}\cdot\Delta_{N}(i)
\]
where $\Delta_{\star}(i)=d_{\star}(i)-d_{\star}(i+1)$ for $1\leq i<n$
and $\Delta_{\star}(n)=d_{\star}(n)$, so that also
\[
\left\langle \mathcal{F}_{H},\Xi\right\rangle =\sum_{i=1}^{n}\Delta_{i}\cdot d_{H}(i)\quad\mbox{and}\quad\left\langle \mathcal{F}_{N},\Xi\right\rangle =\sum_{i=1}^{n}\Delta_{i}\cdot d_{N}(i)
\]
where $\Delta_{i}=\delta_{i}-\delta_{i-1}$ for $1<i\leq n$ and $\Delta_{1}=\delta_{1}$.
Now $(1)$ implies that $d_{H}(i)\leq d_{N}(i)$ for all $i$ and
$d_{H}(1)=d_{N}(1)$ from which $(2)$ follows since $\Delta_{i}>0$
for $i>1$. Note that with these notations, we also find that 
\[
\left\langle \mathcal{F}_{N}^{\iota},\Xi\right\rangle =\sum_{i=1}^{n}\Delta_{i}\cdot d_{N}^{\iota}(i)\quad\mbox{with}\quad d_{N}^{\iota}(i)=\deg(\mathcal{F}_{N}^{\iota}\vert\Xi^{\delta_{i}}).
\]
But $\deg(\mathcal{F}_{N})+\deg(\mathcal{F}_{N}^{\iota})=0$ and more
generally $\deg(\mathcal{F}_{N}\vert W)+\deg(\mathcal{F}_{N}^{\iota}\vert W)=0$
for every $\varphi$-stable $K$-subspace $W$ of $V$ by functoriality
of the slope $\mathbb{Q}$-graduation, thus $d_{N}(i)+d_{N}^{\iota}(i)=0$
for all $i$. Therefore $\left\langle \mathcal{F}_{N},\Xi\right\rangle +\left\langle \mathcal{F}_{N}^{\iota},\Xi\right\rangle =0$
for every $\varphi$-stable $\mathbb{R}$-graduation $\Xi$ on $V$,
which proves that $(2)\Leftrightarrow(3)$. 
\end{proof}
\noindent We denote by $\Fil^{\Gamma}\Iso(k)$ the category of $\Gamma$-filtered
isocrystals over $k$ and denote by $\Fil^{\Gamma}\Iso(k)^{wa}$ its
full sub-category of weakly admissible objects, as defined in the
previous lemma. These are both exact $\mathbb{Q}_{p}$-linear $\otimes$-categories,
the smaller one is also abelian, and it is even a \emph{neutral }Tannakian
category when $\Gamma=\mathbb{Z}$ \cite{CoFo00}.

\subsection{~\label{subsec:FilIsoG}}

Fix a reductive group $G$ over $\mathbb{Q}_{p}$, let $\Rep(G)$
be the Tannakian category of algebraic representations of $G$ on
finite dimensional $\mathbb{Q}_{p}$-vector spaces, and let 
\[
V:\Rep(G)\rightarrow\Vect(\mathbb{Q}_{p})
\]
be the natural fiber functor. A $\Gamma$-graduation on $V_{K}=V\otimes K$
(resp.~$\Gamma$-filtration on $V_{K}$, $G$-isocrystal over $k$
or $\Gamma$-filtered $G$-isocrystal over $k$) is a factorization
of 
\[
V_{K}:\Rep(G)\rightarrow\Vect(K)
\]
through the natural fiber functor of the relevant category $\Gra^{\Gamma}(K)$
(resp.~$\Fil^{\Gamma}(K)$, $\Iso(k)$, $\Fil^{\Gamma}\Iso(k)$).
We require these factorizations to be exact and compatible with the
$\otimes$-products and their neutral objects. The set of all such
factorizations is the set of $K$-valued points of a smooth and separated
$\mathbb{Q}_{p}$-scheme $\mathbb{G}^{\Gamma}(G)$ (resp. $\mathbb{F}^{\Gamma}(G)$,
$G$, $G\times\mathbb{F}^{\Gamma}(G)$). The $G$-isocrystal attached
to $b\in G(K)$ maps $\rho\in\Rep(G)$ to $(V_{K}(\rho),\rho(b)\circ\mathrm{Id}\otimes\sigma)$
(this would be $(V,\rho)\mapsto(V_{K},\rho(b)\sigma)$ in standard
notations). We identify $b$ with the corresponding Frobenius element
$\varphi=(b,\sigma)$ in $G(K)\rtimes\left\langle \sigma\right\rangle $.
For $\rho\in\Rep(G)$, we denote by $\mathcal{G}(\rho)$, $\mathcal{F}(\rho)$
or $\varphi(\rho)$ the $\Gamma$-graduation, $\Gamma$-filtration
or Frobenius on $V_{K}(\rho)=V(\rho)\otimes K$ attached to a $\Gamma$-graduation
$\mathcal{G}$, $\Gamma$-filtration $\mathcal{F}$ or Frobenius $\varphi$
on $V_{K}$. We say that a $\Gamma$-filtered $G$-isocrystal 
\[
(\varphi,\mathcal{F}):\Rep(G)\rightarrow\Fil^{\Gamma}\Iso(k)
\]
 is weakly admissible if it factors through the subcategory $\Fil^{\Gamma}\Iso(k)^{wa}$
of $\Fil^{\Gamma}\Iso(k)$.
\begin{rem}
The $G$-isocrystals or $\Gamma$-filtered $G$-isocrystals considered
in this paper are trivial in the sense that the underlying fiber functor
is required to be the trivial fiber functor $V_{K}$. We caution our
reader not to apply our results carelessly on more general $G$-isocrystals
or $\Gamma$-filtered $G$-isocrystals, unless he or she has checked
that the underlying fiber functors are at least isomorphic (over $K$)
to the trivial one. 
\end{rem}

\subsection{~\label{subsec:FilIsoGRep}}

There is a $G$-equivariant sequence of morphisms of $\mathbb{Q}_{p}$-schemes
\[
\xymatrix{\mathbb{G}^{\Gamma}(G)\ar@{->>}[r]\sp(0.45){\Fi} & \mathbb{F}^{\Gamma}(G)\ar@{->>}[r]\sp(0.45){t} & \mathbb{C}^{\Gamma}(G)}
\]
where $\mathbb{C}^{\Gamma}(G)=G\backslash\mathbb{G}^{\Gamma}(G)=G\backslash\mathbb{F}^{\Gamma}(G)$
and the $\Fi$-morphism is induced by the previous $\Fi$-functors.
We set $\mathbf{G}^{\Gamma}(G_{K})=\mathbb{G}^{\Gamma}(G)(K)$, $\mathbf{F}^{\Gamma}(G_{K})=\mathbb{F}^{\Gamma}(G)(K)$
and denote by $\mathbf{C}^{\Gamma}(G_{K})$ the image of $\mathbf{F}^{\Gamma}(G_{K})$
in $\mathbb{C}^{\Gamma}(G)(K)$ under the type morphism $t$, giving
rise to a $G(K)$-equivariant sequence of surjective maps \cite[4.1]{Co14}
\[
\xymatrix{\mathbf{G}^{\Gamma}(G_{K})\ar@{->>}[r]\sp(0.47){\Fi} & \mathbf{F}^{\Gamma}(G_{K})\ar@{->>}[r]\sp(0.45){t} & \mathbf{C}^{\Gamma}(G_{K}).}
\]
Then $\mathbf{C}^{\Gamma}(G_{K})=G(K)\backslash\mathbf{G}^{\Gamma}(G_{K})=G(K)\backslash\mathbf{F}^{\Gamma}(G_{K})$,
and it is a monoid. When $\Gamma=\mathbb{R}$, it is the usual (relative)
Weyl cone attached to $G$ over $K$, and it comes equipped with a
partial order (the dominance order). If $S\subset G_{K}$ is a maximal
$K$-split torus and $B\subset G_{K}$ is a minimal parabolic subgroup
of $G_{K}$ containing the centralizer of $S$, then $\mathbf{C}^{\Gamma}(G_{K})\simeq\Hom^{+}(X^{\ast}(S),\Gamma)$,
where $\Hom^{+}(X^{\ast}(S),\Gamma)$ is the monoid of all morphisms
$f:X^{\ast}(S)\rightarrow\Gamma$ which are non-negative on the roots
of $S$ in the Lie algebra of $B$ \cite[4.1.10]{Co14}; when $\Gamma=\mathbb{R}$,
$f_{1}\leq f_{2}$ if and only if $f_{2}-f_{1}$ is a non-negative
linear combination of $B$-positive (relative) coroots \cite[2.4 and 5.1.2]{Co14}. 

The Newton slope decomposition of section~\ref{subsec:Iso} yields
a morphism
\[
\nu:G(K)\rightarrow\mathbf{G}^{\mathbb{Q}}(G_{K})
\]
such that $\nu(g\diamond b)=g\cdot\nu(b)$ for $g,b\in G(K)$ with
$g\diamond b=gb\sigma(g)^{-1}$, i.e.~a $G(K)$-equivariant morphism
$\varphi\mapsto\nu(\varphi)$ from the $G(K)$-coset $G(K)\cdot\sigma$
of Frobenius elements in $G(K)\rtimes\left\langle \sigma\right\rangle $
(with the action by conjugation) to $\mathbf{G}^{\mathbb{Q}}(G_{K})$. 

All of these constructions are covariantly functorial in $G$, $\Gamma$
and $k$.

\subsection{~\label{subsec:3FiltrationsG}}

A $\Gamma$-filtered $G$-isocrystal $(\varphi,\mathcal{F})\in G(K)\cdot\sigma\times\mathbf{F}^{\Gamma}(G_{K})$
thus again defines three filtrations on $V_{K}$: the \emph{Hodge
}$\Gamma$-filtration $\mathcal{F}_{H}=\mathcal{F}$ in $\mathbf{F}^{\Gamma}(G_{K})$
and the pair of opposed $\varphi$-stable \emph{Newton $\mathbb{Q}$-}filtrations
$\mathcal{F}_{N}=\Fi(\mathcal{G}_{N})$ and $\mathcal{F}_{N}^{\iota}=\Fi(\iota\mathcal{G}_{N})$
in $\mathbf{F}^{\mathbb{Q}}(G_{K})$ attached to the slope $\mathbb{Q}$-graduation
$\mathcal{G}_{N}=\nu(\varphi)$ in $\mathbf{G}^{\mathbb{Q}}(G_{K})$.

\subsection{~\label{subsec:Build}}

On the other hand, a $\Gamma$-filtered $G$-isocrystal $(\varphi,\mathcal{F})$
also ``acts'' on the extended Bruhat-Tits building $\mathbf{B}^{e}(G_{K})$
of $G$ over $K$, through the classical action of the group $G(K)\rtimes\left\langle \sigma\right\rangle $
on $\mathbf{B}^{e}(G_{K})$ and the canonical $G(K)\rtimes\left\langle \sigma\right\rangle $-equivariant
$+$-operation 
\[
+:\mathbf{B}^{e}(G_{K})\times\mathbf{F}^{\mathbb{R}}(G_{K})\rightarrow\mathbf{B}^{e}(G_{K})
\]
which is defined in~\cite[6.2.5]{Co14}. We recall that the extended
Bruhat-Tits building is the direct product of the usual (or reduced)
isogeny-invariant Bruhat-Tits building $\mathbf{B}(G_{K})$ by $X_{\ast}(A)\otimes\mathbb{R}$,
where $A$ is the maximal $K$-split torus in the center of $G_{K}$
and $X_{\ast}(A)$ is the group of cocaracters of $A$. The extended
Bruhat-Tits building $\mathbf{B}^{e}(G_{K})$ and vectorial Tits building
$\mathbf{F}^{\mathbb{R}}(G_{K})$ are covered by apartments, both
indexed by the maximal $K$-split subtori of $G_{K}$; for any such
torus, the corresponding apartment in the Bruhat-Tits building is
canonically equipped with the structure of an affine space whose underlying
vector space is the matching apartment in the vectorial Tits building,
and the above $+$-map is obtained by gluing the resulting faithfully
transitive actions on all apartments. To simplify our notations, we
set 
\[
X=\mathbf{B}^{e}(G_{K})=\mathbf{B}(G_{K})\times X_{\ast}(A)\otimes\mathbb{R}.
\]
For $\rho\in\Rep(G)$, $\varphi(\rho)$ and $\mathcal{F}(\rho)$ similarly
act on 
\[
X(\rho)=\mathbf{B}^{e}(GL(V(\rho))_{K}).
\]

\begin{defn}
For a $\Gamma$-filtered $G$-isocrystal $(\varphi,\mathcal{F})$,
we set 
\[
X_{\varphi}(\mathcal{F})=\left\{ x\in X:\varphi(x)+\mathcal{F}=x\right\} =\left\{ x\in X:\alpha(x)=x\right\} 
\]
where $\alpha(x)=\varphi(x)+\mathcal{F}$ for all $x$ in $X$. For
$\rho\in\Rep(G)$, we set 
\[
X_{\varphi}(\mathcal{F})(\rho)=\left\{ x\in X(\rho):\varphi(\rho)(x)+\mathcal{F}(\rho)=x\right\} =\left\{ x\in X(\rho):\alpha(\rho)(x)=x\right\} 
\]
where $\alpha(\rho)(x)=\varphi(\rho)(x)+\mathcal{F}(\rho)$ for all
$x$ in $X(\rho)$.
\end{defn}

\subsection{~\label{subsec:BuildDist}}

As explained in \cite[\S 5]{Co14}, the choice of a faithful $\tau\in\Rep(G)$
induces a bunch of numerical functions on our buildings. The $G(K)\rtimes\left\langle \sigma\right\rangle $-invariant
``scalar product'' 
\[
\left\langle -,-\right\rangle _{\tau}:\mathbf{F}^{\mathbb{R}}(G_{K})\times\mathbf{F}^{\mathbb{R}}(G_{K})\rightarrow\mathbb{R},\qquad\left\langle \mathcal{F}_{1},\mathcal{F}_{2}\right\rangle _{\tau}=\left\langle \mathcal{F}_{1}(\tau),\mathcal{F}_{2}(\tau)\right\rangle 
\]
yields a $\left\langle \sigma\right\rangle $-invariant length function
\[
\left\Vert -\right\Vert _{\tau}:\mathbf{C}^{\mathbb{R}}(G_{K})\rightarrow\mathbb{R}_{+},\qquad\left\Vert t(\mathcal{F})\right\Vert _{\tau}=\left\Vert \mathcal{F}\right\Vert _{\tau}=\left\langle \mathcal{F},\mathcal{F}\right\rangle _{\tau}^{\frac{1}{2}}
\]
whose composition with the $G(K)\rtimes\left\langle \sigma\right\rangle $-equivariant
``vector valued distance''
\[
\mathbf{d}:\mathbf{B}^{e}(G_{K})\times\mathbf{B}^{e}(G_{K})\rightarrow\mathbf{C}^{\mathbb{R}}(G_{K}),\qquad\mathbf{d}(x,x+\mathcal{F})=t(\mathcal{F})
\]
yields a genuine $G(K)\rtimes\left\langle \sigma\right\rangle $-invariant
distance
\[
d_{\tau}:\mathbf{B}^{e}(G_{K})\times\mathbf{B}^{e}(G_{K})\rightarrow\mathbb{R}_{+}
\]
which turns $\mathbf{B}^{e}(G_{K})$ into a complete CAT(0)-space.
The above $+$-map defines a $G(K)\rtimes\left\langle \sigma\right\rangle $-equivariant
``action'' of $\mathbf{F}^{\mathbb{R}}(G_{K})$ on $X=\mathbf{B}^{e}(G_{K})$
by non-expanding maps, which yields a $G(K)\rtimes\left\langle \sigma\right\rangle $-equivariant
identification of $\mathbf{F}^{\mathbb{R}}(G_{K})$ with the cone
$\mathscr{C}(\partial X)$ on the visual boundary $\partial X$ of
$X$ (acting on $X$ as in \cite{Co13}). Under this identification,
$\partial X=\{\xi\in\mathbf{F}^{\mathbb{R}}(G_{K}):\left\Vert \xi\right\Vert _{\tau}=1\}$.
On the other hand, the Frobenius $\varphi$ is an isometry of $(X,d_{\tau})$.
Thus $\alpha$ is a non-expanding map of $(X,d_{\tau})$. In particular,
\[
X_{\varphi}(\mathcal{F})\neq\emptyset\iff\alpha\mbox{ has bounded orbits}.
\]

\begin{lem}
The isometry $\varphi$ of $(X,d_{\tau})$ is semi-simple, i.e.~$\Min(\varphi)\neq\emptyset$
where 
\begin{eqnarray*}
\Min(\varphi) & = & \left\{ x\in X:d_{\tau}(x,\varphi(x))=\min(\varphi)\right\} \\
\mbox{with}\quad\mbox{\ensuremath{\min}}(\varphi) & = & \inf\left\{ d_{\tau}(x,\varphi(x)):x\in X\right\} .
\end{eqnarray*}
Moreover, $\varphi(x)=x+\mathcal{F}_{N}^{\iota}$ for every $x\in\Min(\varphi)$
and $\mbox{\ensuremath{\mathscr{C}}}(\partial\Min(\varphi))$ is the
fixed point set of $\varphi$ acting on $\mathscr{C}(\partial X)=\mathbf{F}^{\mathbb{R}}(G_{K})$,
i.e. the set of $\varphi$-stable $\mathbb{R}$-filtrations on $V_{K}$.
\end{lem}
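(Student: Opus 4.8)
The plan is to deduce the statement from its classical counterpart for $GL(V(\rho))$, proved in~\cite{CoNi12}, together with the structure theory of the element $\varphi$. Write $\varphi=(b,\sigma)$, let $\nu=\nu(\varphi)\in\mathbf{G}^{\mathbb{Q}}(G_{K})$ be the Newton point and $M=Z_{G}(\nu)$ the attached Levi. The geometric input from~\cite{Co14} is that, both for the embedding attached to the faithful $\rho$ and for the one attached to $M\subseteq G$, the induced map of extended buildings is a $G(K)\rtimes\langle\sigma\rangle$-equivariant isometric embedding onto a closed convex subspace, compatible with the $+$-operations and with the slope $\mathbb{Q}$-graduations. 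For $\rho$ this identifies $\varphi$, $d_{\rho}$, $\mathcal{F}_{N}$ and $\mathcal{F}_{N}^{\iota}$ on $X=\mathbf{B}^{e}(G_{K})$ with their counterparts on $X(\rho)=\mathbf{B}^{e}(GL(V(\rho))_{K})$, where all the assertions are known.

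The crux is to show $\Min(\varphi)\neq\emptyset$. Replacing $\varphi$ by a $G(K)$-conjugate (which is an isometry of $X$, harmless for every assertion), we may assume $b\in M(K)$ and basic there. Then $\mathbf{B}^{e}(M_{K})\hookrightarrow X$ isometrically and $\varphi$-equivariantly, and it decomposes as $\mathbf{B}(M^{\mathrm{der}}_{K})\times A$ with $A=X_{*}(Z(M)^{\circ})_{\mathbb{R}}$ a Euclidean flat; on this product $\varphi$ acts as $\psi\times\theta$, where $\psi$ has bounded orbits on $\mathbf{B}(M^{\mathrm{der}}_{K})$ (the Newton point of $b$ induces the trivial class on $M^{\mathrm{ad}}$), and $\theta$ is the $+$-action of $\mathcal{F}_{N}^{\iota}$, a translation of $A$. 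By the Bruhat--Tits fixed point theorem $\psi$ fixes some $z'\in\mathbf{B}(M^{\mathrm{der}}_{K})$; for any $a\in A$ the point $z=(z',a)\in\mathbf{B}^{e}(M_{K})\subseteq X$ then satisfies $\varphi(z)=z+\mathcal{F}_{N}^{\iota}$ and, iterating inside the flat, $d_{\rho}(z,\varphi^{n}(z))=n\Vert\mathcal{F}_{N}^{\iota}\Vert_{\rho}$. The asymptotic translation length $\lim_{n}d_{\rho}(z,\varphi^{n}(z))/n=\Vert\mathcal{F}_{N}^{\iota}\Vert_{\rho}$ is a lower bound for $\min(\varphi)$, whereas $\min(\varphi)\leq d_{\rho}(z,\varphi(z))=\Vert t(\mathcal{F}_{N}^{\iota})\Vert_{\rho}=\Vert\mathcal{F}_{N}^{\iota}\Vert_{\rho}$; hence all three quantities coincide, $z\in\Min(\varphi)$, and $\min(\varphi)=\Vert\mathcal{F}_{N}^{\iota}\Vert_{\rho}=\Vert\mathcal{F}_{N}\Vert_{\rho}$.

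For the second assertion I would invoke the description of the minimal set of a semi-simple isometry of a complete CAT(0) space~\cite{BrHa99}: $\Min(\varphi)$ is a closed convex $\varphi$-stable subspace on which $\varphi$ translates each point towards a single boundary point $\xi$ by the fixed amount $\min(\varphi)$, so $\varphi(x)=x+\min(\varphi)\cdot\xi$ for all $x\in\Min(\varphi)$ (the $+$ being computed in $\Min(\varphi)$ or, since it is convex in $X$, in $X$); evaluating at $x=z$ identifies $\min(\varphi)\cdot\xi$ with $\mathcal{F}_{N}^{\iota}$, whence $\varphi(x)=x+\mathcal{F}_{N}^{\iota}$ on $\Min(\varphi)$ (the case $\nu=0$ being trivial). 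For the last assertion, let $\mathcal{E}\in\mathbf{F}^{\mathbb{R}}(G_{K})=\mathscr{C}(\partial X)$ be $\varphi$-stable and fix $x_{0}\in\Min(\varphi)$: equivariance of the $+$-operation gives $\varphi(x_{0}+\mathcal{E})=\varphi(x_{0})+\mathcal{E}$, and since $+\mathcal{E}$ is non-expanding, $d_{\rho}(x_{0}+\mathcal{E},\varphi(x_{0}+\mathcal{E}))\leq d_{\rho}(x_{0},\varphi(x_{0}))=\min(\varphi)$, so $x_{0}+\mathcal{E}\in\Min(\varphi)$; applying this to the positive multiples of $\mathcal{E}$ shows the whole ray $t\mapsto x_{0}+t\mathcal{E}$ lies in $\Min(\varphi)$, hence $\mathcal{E}\in\mathscr{C}(\partial\Min(\varphi))$. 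Conversely, for $\xi\in\partial\Min(\varphi)$ the ray $t\mapsto x_{0}+t\xi$ stays in the convex set $\Min(\varphi)$ while $\varphi(x_{0}+t\xi)=(x_{0}+t\xi)+\mathcal{F}_{N}^{\iota}$ stays within bounded distance $\Vert\mathcal{F}_{N}^{\iota}\Vert_{\rho}$ of it, so $\varphi(\xi)=\xi$ in the limit and, by equivariance of the cone action, $\varphi$ fixes $r\xi$ for every $r\geq0$; therefore $\mathscr{C}(\partial\Min(\varphi))$ is exactly the fixed locus of $\varphi$ on $\mathscr{C}(\partial X)$, i.e. the set of $\varphi$-stable $\mathbb{R}$-filtrations. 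The one genuinely non-formal ingredient is the non-emptiness of $\Min(\varphi)$ — the semi-simplicity of $\varphi$ — which rests on the reduction to a basic element of a Levi (equivalently, via $\rho$, on the existence of a strongly divisible-type point in the sense of~\cite{CoNi12}); I expect this to be the main obstacle, the rest being bookkeeping with the $+$-operation and CAT(0) geometry.
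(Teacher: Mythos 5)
Your treatment of the last assertion (that $\mathscr{C}(\partial\Min(\varphi))$ is exactly the $\varphi$-fixed locus of $\mathscr{C}(\partial X)$) is correct and matches the paper's: one inclusion is \cite[II.6.8]{BrHa99}, the other is the convexity/non-expansion argument with the ray $t\mapsto x_0+t\mathcal{E}$. For the first two assertions, however, the paper simply cites \cite{CoNi12} (over algebraically closed $k$) together with \cite[II.6.2]{BrHa99} for descent, whereas you attempt to re-derive the semi-simplicity via the reduction to a basic element of the Levi $M=Z_G(\nu)$. That is indeed the mechanism underlying the cited result, but your sketch has a genuine gap at its pivotal step, and it also skips the descent entirely.

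The gap: you assert that $\psi=\bar b\,\sigma$ has bounded orbits on $\mathbf{B}(M^{\mathrm{der}}_K)$ \emph{because} ``the Newton point of $b$ induces the trivial class on $M^{\mathrm{ad}}$.'' Triviality of the Newton point only tells you that the asymptotic translation length of $\psi$ is $0$; it does not by itself rule out a parabolic isometry with translation length $0$ and no fixed point, which is exactly the possibility that makes $\Min(\varphi)\neq\emptyset$ a theorem rather than a formality. What is actually needed is that a basic element (over algebraically closed $k$) is $\sigma$-conjugate to a bounded element of $M^{\mathrm{ad}}(K)$ — for $GL_n$ this is the existence of a $\varphi$-stable lattice in a unit-root isocrystal, and in general it is a nontrivial input (Kottwitz, or the construction in \cite{CoNi12}). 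You flag this as ``the one genuinely non-formal ingredient,'' which is the right instinct, but the parenthetical justification you offer does not supply it. Separately, both the reduction to a basic $b$ in a Levi and the bounded-representative statement are only available over an algebraic closure of $k$; the lemma is stated for an arbitrary perfect $k$, and you do not perform the descent (the paper handles it via $X=\left(X'\right)^{\Gal(k'/k)}$ and \cite[II.6.2]{BrHa99}). With a concrete citation for the bounded representative and the descent step added, your reconstruction would be a valid alternative to citing \cite{CoNi12} outright; as written it leaves precisely the hard part unproved.
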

\begin{proof}
The first two assertions are established in~\cite{CoNi16} when $k$
is algebraically closed and the general case follows from~\cite[II.6.2 (4)]{BrHa99}.
The final claim holds true for any semi-simple isometry $\varphi$
of a CAT(0)-space $(X,d)$. Indeed, $\partial\Min(\varphi)$ is contained
in $(\partial X)^{\varphi=\mathrm{Id}}$ by~\cite[II.6.8 (4)]{BrHa99}.
Conversely, suppose that $\xi\in\partial X$ is fixed by $\varphi$
and choose some $x$ in $\Min(\varphi)\neq\emptyset$. Then for every
$t\geq0$, 
\[
\varphi(x+t\xi)=\varphi(x)+t\varphi(\xi)=\varphi(x)+t\xi,
\]
thus $x+t\xi$ also belongs to $\Min(\varphi)$ since 
\[
\min(\varphi)\leq d(x+t\xi,\varphi(x+t\xi))=d(x+t\xi,\varphi(x)+t\xi)\leq d(x,\varphi(x))=\min(\varphi)
\]
by convexity of the CAT(0)-distance $d$, therefore $\xi\in\partial\Min(\varphi)$.
\end{proof}
\begin{rem}
We should rather write $\min_{\tau}(\varphi)$ to reflect the dependency
of $\min(\varphi)$ on the choice of $\tau$, but the subset $\Min(\varphi)$
of $X$ really does not depend upon that choice: it is precisely the
set of all $x\in X$ such that $\varphi(x)=x+\mathcal{F}_{N}^{\iota}$. 
\end{rem}

\subsection{~\label{subsec:BuildGL}}

For any $\rho\in\Rep(G)$, applying these constructions to $GL(V(\rho))$
and its tautological faithful representation on $V(\rho)$, we obtain
a canonical distance $d_{\rho}$ on $X(\rho)$ for which $\varphi(\rho)$
is an isometry while $\alpha(\rho)$ is a non-expanding map. Moreover:
\[
X_{\varphi}(\mathcal{F})(\rho)\neq\emptyset\iff\alpha(\rho)\mbox{\,\ has bounded orbits.}
\]
Here the cones $\mathscr{C}(\partial X(\rho))$ and $\mathscr{C}(\partial\Min(\varphi(\rho)))$
are respectively identified with the set of all $\mathbb{R}$-filtrations
on $V_{K}(\rho)$ and its subset of $\varphi(\rho)$-stable $\mathbb{R}$-filtrations.

\subsection{~\label{subsec:MainTheo1}}

Fix a faithful $\tau\in\Rep(G)$ and some subgroup $\Delta\neq0$
of $\mathbb{R}$. Then:
\begin{thm}
\label{thm:Main1}The following conditions are equivalent:

\begin{lyxlist}{XXX}
\item [{$(1)$}] $(\varphi,\mathcal{F})$ is weakly admissible.
\item [{$(1_{\rho})$}] For every $\rho\in\Rep(G)$, $(V_{K}(\rho),\varphi(\rho),\mathcal{F}(\rho))$
is weakly admissible.
\item [{$(1_{\tau})$}] $(V_{K}(\tau),\varphi(\tau),\mathcal{F}(\tau))$
is weakly admissible.
\item [{$(2^{\Delta})$}] For every $\varphi$-stable $\Delta$-filtration
$\Xi$ on $V_{K}$, 
\[
\left\langle \mathcal{F}_{H},\Xi\right\rangle _{\tau}\leq\left\langle \mathcal{F}_{N},\Xi\right\rangle _{\tau}.
\]
\item [{$(2_{\rho}^{\Delta})$}] For every $\rho\in\Rep(G)$ and every
$\varphi$-stable $\Delta$-filtration $\Xi$ on $V_{K}(\rho)$, 
\[
\left\langle \mathcal{F}_{H}(\rho),\Xi\right\rangle \leq\left\langle \mathcal{F}_{N}(\rho),\Xi\right\rangle .
\]
\item [{$(2_{\tau}^{\Delta})$}] For every $\varphi$-stable $\Delta$-filtration
$\Xi$ on $V_{K}(\tau)$, 
\[
\left\langle \mathcal{F}_{H}(\tau),\Xi\right\rangle \leq\left\langle \mathcal{F}_{N}(\tau),\Xi\right\rangle .
\]
\item [{$(3)$}] $X_{\varphi}(\mathcal{F})\neq\emptyset$.
\item [{$(3_{\rho})$}] For every $\rho\in\Rep(G)$, $X_{\varphi}(\mathcal{F})(\rho)\neq\emptyset$.
\item [{$(3_{\tau})$}] $X_{\varphi}(\mathcal{F})(\tau)\neq\emptyset$.
\item [{$(4)$}] $\alpha$ has bounded orbits in $(X,d_{\tau})$.
\item [{$(4_{\rho})$}] For every $\rho\in\Rep(G)$, $\alpha(\rho)$ has
bounded orbits in $(X(\rho),d_{\rho})$.
\item [{$(4_{\tau})$}] $\alpha(\tau)$ has bounded orbits in $(X(\tau),d_{\tau})$.
\end{lyxlist}
\end{thm}
\begin{rem}
It follows from the proof of Lemma~\ref{lem:DefWeakAdmEquScProd}
that here also
\[
\left\langle \mathcal{F}_{N},\Xi\right\rangle _{\tau}+\left\langle \mathcal{F}_{N}^{\iota},\Xi\right\rangle _{\tau}=0
\]
for every $\varphi$-stable $\mathbb{R}$-filtration $\Xi$ on $V_{K}$. 
\end{rem}
\begin{proof}
Obviously $(1)\Leftrightarrow(1_{\rho})$, $(2_{\tau}^{\Delta})\Rightarrow(2^{\Delta})$
and $(x_{\rho})\Rightarrow(x_{\tau})$ for $x\in\{1,2,3,4\}$. We
have already seen that $(3_{x})\Leftrightarrow(4_{x})$ for all $x\in\{\emptyset,\rho,\tau\}$
and Lemma~\ref{lem:DefWeakAdmEquScProd} shows that $(1_{\rho})\Leftrightarrow(2_{\rho}^{\Delta})$
and $(1_{\tau})\Leftrightarrow(2_{\tau}^{\Delta})$. Set $(2_{x})=(2_{x}^{\mathbb{R}})$
for $x\in\{\emptyset,\rho,\tau\}$ and let $(2')$, $(3')$ and $(4')$
be $(2)$, $(3)$ and $(4)$ for the base change $(\varphi',\mathcal{F}')$
of $(\varphi,\mathcal{F})$ to an algebraic closure $k'$ of $k$.
Then obviously $(2_{x})\Rightarrow(2_{x}^{\Delta}),$ $(2')\Rightarrow(2)$
and $(4)\Leftrightarrow(4')$ \textendash{} thus also $(3')\Leftrightarrow(3)$.
The main theorem of \cite{Co13} asserts that $(2)\Leftrightarrow(3)$
provided that $\varphi$ satisfies a certain decency condition, which
holds true by the main result of~\cite{CoNi16} when $k$ is algebraically
closed. Therefore $(2')\Leftrightarrow(3')$. We will show below that
$(2^{\Delta})\Rightarrow(2)$ (\ref{subsec:2Dimplies2}), $(2)\Rightarrow(2')$
(\ref{subsec:2implies2'}) and $(4)\Rightarrow(4_{\rho})$ (\ref{subsec:4implies4tau}).
Since $(3_{\rho})\Leftrightarrow(2_{\rho})$ and $(3_{\tau})\Leftrightarrow(2_{\tau})$
are special cases of $(3)\Leftrightarrow(3')\Leftrightarrow(2')\Leftrightarrow(2)$
(applied respectively to $G=GL(V(\rho))$ and $G=GL(V(\tau)))$, the
theorem follows. 
\end{proof}
\begin{rem}
Apart from the added generality of allowing $\Gamma\neq\mathbb{Z}$,
the equivalence of most of these conditions is either well-known or
trivial and can be found in various places. For instance $(1)\Leftrightarrow(1_{\tau})$
follows from~\cite[1.18]{RaZi96} and $(1_{\tau})\Leftrightarrow(3_{\tau})$
is Laffaille's criterion for weak admissibility \cite[3.2]{Laf80}.
Our only new contribution is $(2)\Leftrightarrow(3)$, which is essentially
our metric reformulation of Laffaille's algebraic proof in \cite{Co13}. 
\end{rem}

\subsection{\label{subsec:2Dimplies2}$(2^{\Delta})\Rightarrow(2)$}

Suppose that $(2)$ does not hold: there exists a $\varphi$-stable
$\mathbb{R}$-filtration $\Xi$ on $V_{K}$ such that $\left\langle \mathcal{F}_{H},\Xi\right\rangle _{\tau}>\left\langle \mathcal{F}_{N},\Xi\right\rangle _{\tau}$.
By~\cite[4.1.13]{Co14}, there exists a maximal split torus $S$
of $G_{K}$ with character group $M=\Hom_{K}(S,\mathbb{G}_{m,K})$
and a morphism $f:M\rightarrow\mathbb{R}$ such that for every $\rho\in\Rep(G)$
and $\gamma\in\mathbb{R}$, 
\[
\Xi(\rho)^{\gamma}=\oplus_{m\in M,\,f(m)\geq\gamma}V(\rho)_{m}\quad\mbox{in}\quad V_{K}(\rho)=V(\rho_{K})
\]
where $V(\rho_{K})=\oplus_{m\in M}V(\rho)_{m}$ is the weight decomposition
of the restriction of $\rho_{K}$ to $S$. The image of $f$ is a
finitely generated subgroup $Q$ of $\mathbb{R}$. Since $\Delta$
is a non-trivial subgroup of $\mathbb{R}$, there exists a sequence
of morphisms $q_{n}:Q\rightarrow\Delta$ such that $\frac{1}{n}q_{n}:Q\rightarrow\mathbb{R}$
converges simply to the given embedding $Q\hookrightarrow\mathbb{R}$.
The formula 
\[
\Xi_{n}(\rho)^{\gamma}=\oplus_{m\in M,\,q_{n}\circ f(m)\geq\gamma}V(\rho)_{m}
\]
then defines a $\Delta$-filtration $\Xi_{n}\in\mathbf{F}^{\Delta}(G_{K})$
with $\frac{1}{n}\Xi_{n}\rightarrow\Xi$ in $(\mathbf{F}^{\mathbb{R}}(G_{K}),d_{\tau})$,
thus also 
\[
\left\langle \mathcal{F}_{H},\Xi_{n}\right\rangle _{\tau}=n\left\langle \mathcal{F}_{H},{\textstyle \frac{1}{n}}\Xi_{n}\right\rangle _{\tau}>n\left\langle \mathcal{F}_{N},{\textstyle \frac{1}{n}}\Xi_{n}\right\rangle _{\tau}=\left\langle \mathcal{F}_{N},\Xi_{n}\right\rangle _{\tau}
\]
for $n\gg0$. For any $\tau$ in $\Rep(G)$, $(m_{1},m_{2})$ in $\{m\in M:V(\rho)_{m}\neq0\}$
and $n\gg0$, 
\[
f(m_{1})-f(m_{2})=f(m_{1}-m_{2})\quad\mbox{and}\quad q_{n}\circ f(m_{1})-q_{n}\circ f(m_{2})=q_{n}\circ f(m_{1}-m_{2})
\]
have the same sign in $\{0,\pm1\}$, thus for every $\rho$ in $\Rep(G)$
and $n\gg0$, 
\[
\{\Xi_{n}(\rho)^{\gamma}:\gamma\in\Delta\}=\{\Xi(\rho)^{\gamma}:\gamma\in\mathbb{R}\}.
\]
In particular, $\Xi_{n}(\tau)$ is fixed by $\varphi(\tau)$ for $n\gg0$.
But then $\Xi_{n}$ is a $\varphi$-stable $\Delta$-filtration on
$V_{K}$ by~\cite[4.2.10 or 3.11.12]{Co14}, thus $(2^{\Delta})$
also does not hold.

\subsection{\label{subsec:2implies2'}$(2)\Rightarrow(2')$}

Suppose that $(2')$ does not hold. Then by~\cite[Theorem 1, (4)]{Co13},
there is a unique $\xi$ in $\partial\Min(\varphi')$ such that $\left\langle \mathcal{F}_{H},\xi\right\rangle _{\tau}+\left\langle \mathcal{F}_{N}^{\iota},\xi\right\rangle _{\tau}>0$
is maximal. This uniqueness implies that $\xi\in\mathbf{F}^{\mathbb{R}}(G_{K'})$
is fixed by the group $\Gal(k'/k)$ of continuous automorphisms of
$K'/K$. Therefore $\xi\in\mathbf{F}^{\mathbb{R}}(G_{K})$ and $(2)$
also does not hold.

\subsection{\label{subsec:4implies4tau}$(4)\Rightarrow(4_{\rho})$}

Suppose that $(4)$ holds and fix $\rho\in\Rep(G)$ corresponding
to a morphism $f:G\rightarrow H=GL(V(\rho))$. By~\cite[5.8.4]{Co14},
there is a finite Galois extension $L$ of $\mathbb{Q}_{p}$ and for
any factor $\tilde{K}$ of $L\otimes K$, a map $f:\mathbf{B}^{e}(G_{\tilde{K}})\rightarrow\mathbf{B}^{e}(H_{\tilde{K}})$
such that 
\[
f(g\cdot x)=f(g)\cdot f(x),\quad f(x+\mathcal{G})=f(x)+f(\mathcal{G})\quad\mbox{and}\quad f(\theta\cdot x)=\theta\cdot f(x)
\]
for every $x\in\mathbf{B}^{e}(G_{\tilde{K}})$, $g\in G(\tilde{K})$,
$\mathcal{G}\in\mathbf{F}^{\mathbb{R}}(G_{\tilde{K}})$ and $\theta\in\Aut(\tilde{K}/\mathbb{Q}_{p})$.
Fix an extension $\tilde{\sigma}\in\Aut(\tilde{K}/\mathbb{Q}_{p})$
of $\sigma\in\Aut(K/\mathbb{Q}_{p})$, let $\tilde{\varphi}=(b,\tilde{\sigma})$
be the corresponding extension of $\varphi=(b,\sigma)$, write $\tilde{\alpha}$
and $\tilde{\alpha}(\rho)$ for the induced extensions of $\alpha$
and $\alpha(\rho)$ to $\mathbf{B}^{e}(G_{\tilde{K}})\supset\mathbf{B}^{e}(G_{K})$
and $\mathbf{B}^{e}(H_{\tilde{K}})\supset\mathbf{B}^{e}(H_{K})$.
Then $f(\tilde{\alpha}(x))=\tilde{\alpha}(\rho)(f(x))$ for every
$x\in\mathbf{B}^{e}(G_{\tilde{K}})$. Since $\alpha$ has a fixed
point $x$ in $\mathbf{B}^{e}(G_{K})$, $f(x)$ is a fixed point of
$\tilde{\alpha}(\rho)$ in $\mathbf{B}^{e}(H_{\tilde{K}})$, thus
$\tilde{\alpha}(\rho)$ has bounded orbits, and so does $\alpha(\rho)=\tilde{\alpha}(\rho)\vert\mathbf{B}^{e}(H_{K})$. 

\section{Mazur's Inequality\label{sec:Mazur}}

In this section, we explain our variant of Mazur's inequality (\ref{subsec:MazursIneq1})
and prove its converse (Theorem~\ref{thm:Main2MazurRec}, $\nu\leq\mu^{\sharp}\Longrightarrow X_{\varphi}(\mu)\neq\emptyset$
), following the Fontaine-Rapoport strategy, i.e.~using a covering
of our affine Deligne-Lusztig sets (the $X_{\varphi}(\mu)$'s) by
our subsets of strongly divisible points (the $X_{\varphi}(\mathcal{F})$'s,
see~ \ref{subsec:DefAffDelSets} and \ref{subsec:AnalysisOfAddDel=000026StronglyDiv}),
whose non-emptiness is characterized by our variant of Laffaille's
theorem (Theorem~\ref{thm:Main1}). 

\subsection{~\label{subsec:DefAffDelSets}}

Fix a Frobenius $\varphi=b\cdot\sigma\in G(K)\rtimes\left\langle \sigma\right\rangle $.
For $\mu\in\mathbf{C}^{\mathbb{R}}(G_{K})$, we define 
\[
X_{\varphi}(\mu)=\left\{ x\in X:\,\mathbf{d}(\varphi(x),x)=\mu\right\} .
\]
For any $x\in X_{\varphi}(\mu)$, there is an $\mathcal{F}\in\mathbf{F}^{\mathbb{R}}(G_{K})$
of type $\mu$ such that $x=\varphi(x)+\mathcal{F}$, i.e.~$x\in X_{\varphi}(\mathcal{F})$.
Conversely, $X_{\varphi}(\mathcal{F})\subset X_{\varphi}(\mu)$ for
any $\mathcal{F}$ of type $\mu$, thus 
\[
X_{\varphi}(\mu)=\cup_{\mathcal{F}\in\mathbf{F}^{\mathbb{R}}(G_{K}):t(\mathcal{F})=\mu}X_{\varphi}(\mathcal{F})=\cup_{\mathcal{F}\in\mathbf{Adm}_{\varphi}(\mu)}X_{\varphi}(\mathcal{F})
\]
and $X_{\varphi}(\mu)\neq\emptyset\Leftrightarrow\mathbf{Adm}_{\varphi}(\mu)\neq\emptyset$
by the previous theorem, where 
\[
\mathbf{Adm}_{\varphi}(\mu)=\left\{ \mathcal{F}\in\mathbf{F}^{\mathbb{R}}(G_{K}):\,(\varphi,\mathcal{F})\mbox{ is weakly admissible and }t(\mathcal{F})=\mu\right\} .
\]

\subsection{~\label{subsec:MazursIneq1}}

Write $\nu=t(\mathcal{F}_{N})$. As $\mathcal{F}_{N}\in\mathbf{F}^{\mathbb{Q}}(G_{K})$
is fixed by $\varphi=b\cdot\sigma$ and the type map $t:\mathbf{F}^{\mathbb{Q}}(G_{K})\rightarrow\mathbf{C}^{\mathbb{Q}}(G_{K})$
is $G(K)$-invariant, $\nu$ belongs to the fixed point set of $\sigma$
in $\mathbf{C}^{\mathbb{Q}}(G_{K})$. Since $\mathbf{C}^{\mathbb{Q}}(G_{K})$
is a uniquely divisible commutative monoid on which $\left\langle \sigma\right\rangle $
acts with finite orbits, averaging over these orbits yields a retraction
$\mu\mapsto\mu^{\sharp}$ onto the fixed point set of $\sigma$. When
$k$ is algebraically closed, \cite[Théorème 6]{CoNi16} shows that
\[
X_{\varphi}(\mu)\neq\emptyset\Longrightarrow\nu\leq\mu^{\sharp}\quad\mbox{in }\left(\mathbf{C}^{\mathbb{Q}}(G_{K}),\leq\right)
\]
where $\leq$ is the dominance partial order on $\mathbf{C}^{\mathbb{Q}}(G_{K})$.
This is still true over any perfect field $k$, because if $k'$ is
an algebraic closure of $k$ and $K'=\Frac(W(k'))$, then $\mathbf{C}^{\mathbb{Q}}(G_{K})\hookrightarrow\mathbf{C}^{\mathbb{Q}}(G_{K'})$
is a $\left\langle \sigma\right\rangle $-equivariant embedding of
partially ordered commutative monoids. We will establish below that
conversely, $\nu\leq\mu^{\sharp}$ implies $X_{\varphi}(\mu)\neq\emptyset$
when $k$ is algebraically closed and $G$ is unramified over $\mathbb{Q}_{p}$.
The latter assumption seems superfluous, but some variant of the former
is really needed: if $k=\mathbb{F}_{p}$ and $\varphi=\sigma$ (i.e.~$b=1$),
then $\nu=0$ but $X_{\varphi}(\mu)\neq\emptyset\iff\mu=0$. 

\subsection{~\label{subsec:AnalysisOfAddDel=000026StronglyDiv}}

The subset $X_{\varphi}(\mu)$ of $X=\mathbf{B}^{e}(G_{K})$ is closed
for the canonical topology of $X$, but it is typically not convex,
also $X_{\varphi}(\nu)=\Min(\varphi)$ is convex by~\cite[II.6.2]{BrHa99}.
On the other hand, $X_{\varphi}(\mathcal{F})$ is always closed and
convex by \cite[Theorem 1.3]{ChPh06}. The group 
\[
J=\Aut^{\otimes}(V_{K},\varphi)=\left\{ g\in G(K):\,\varphi g=g\varphi\right\} =\{g\in G(K):g\diamond b=b\}
\]
acts on $X_{\varphi}(\mu)$ and $\mathbf{Adm}_{\varphi}(\mu)$, and
$j\in J$ maps $X_{\varphi}(\mathcal{F})$ to $X_{\varphi}(j\cdot\mathcal{F})$.
The convex projection $p:X\twoheadrightarrow\Min(\varphi)$ is non-expanding,
$J$-equivariant, and of formation compatible with algebraic base
change on $k$. Indeed if $k'$ is a Galois extension of $k$ and
$p':X'\twoheadrightarrow\Min(\varphi')$ is the corresponding projection,
then $p'$ is also $\Gal(k'/k)$-equivariant. It thus maps $X=X'^{\Gal(k'/k)}$
\cite[2.6.1]{Ti79} to $\Min(\varphi')\cap X=\Min(\varphi)$ \cite[II.6.2]{BrHa99},
from which easily follows that $p'\vert X$ equals $p$. The projection
$p$ restricts to a quasi-isometric $J$-equivariant embedding $p:X_{\varphi}(\mu)\rightarrow\Min(\varphi)$,
which is even a $J$-equivariant quasi-isometry (as defined in~\cite[I.8.14]{BrHa99})
when $k$ is algebraically closed and $X_{\varphi}(\mu)\neq\emptyset$.
Indeed, the restriction of $p$ to $X_{\varphi}(\mu)$ has bounded
fibers and $J$ acts cocompactly on $\Min(\varphi)$ when $k$ is
algebraically closed by \cite{CoNi16}. In this case, the boundary
of $X_{\varphi}(\mu)$ equals that of $\Min(\varphi)$, i.e.~it is
the fixed point set of $\varphi$ in the boundary of $X$. The boundary
of $X_{\varphi}(\mathcal{F})$ is described in \cite[Theorem 1]{Co13}.

\subsection{~}

We now establish our second main result:
\begin{thm}
\label{thm:Main2MazurRec}If $G$ is unramified over $\mathbb{Q}_{p}$
and $k$ is algebraically closed, then
\[
X_{\varphi}(\mu)\neq\emptyset\iff\mathbf{Adm}_{\varphi}(\mu)\neq\emptyset\iff\nu\leq\mu^{\sharp}.
\]
\end{thm}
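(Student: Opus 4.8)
The plan is to prove the two non-trivial implications, namely $\mathbf{Adm}_\varphi(\mu)\neq\emptyset\Rightarrow\nu\leq\mu^\sharp$ (which with Theorem~\ref{thm:Main1} gives the forward direction) and $\nu\leq\mu^\sharp\Rightarrow X_\varphi(\mu)\neq\emptyset$; the remaining equivalence $X_\varphi(\mu)\neq\emptyset\iff\mathbf{Adm}_\varphi(\mu)\neq\emptyset$ is already recorded in the excerpt as a consequence of Theorem~\ref{thm:Main1}. The first implication is Mazur's inequality and is essentially free: if $\mathbf{Adm}_\varphi(\mu)\neq\emptyset$ then $X_\varphi(\mathcal F)\neq\emptyset$ for some $\mathcal F$ of type $\mu$ by Theorem~\ref{thm:Main1}, hence $X_\varphi(\mu)\neq\emptyset$, and then $\nu\leq\mu^\sharp$ by the result of \cite{CoNi12} cited just above the theorem (valid over any perfect $k$). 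So the real content is the converse, and by the same circle of equivalences it suffices to produce, given $\nu\leq\mu^\sharp$, a filtration $\mathcal F$ of type $\mu$ making $(\varphi,\mathcal F)$ weakly admissible — equivalently, a point $x\in X$ with $\mathbf d(\varphi(x),x)=\mu$.

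The strategy for the converse is the building-theoretic incarnation of the Fontaine--Rapoport argument, pushed through $G$ via the Tannakian/Bruhat--Tits dictionary set up in Sections 2--3. First I would reduce to the ``isoclinic-plus-ordering'' situation: since $G$ is unramified over $\mathbb Q_p$ and $k$ is algebraically closed, $\varphi$ is decent and $\Min(\varphi)\neq\emptyset$ carries a cocompact action of $J=\Aut^\otimes(V_K,\varphi)$, and $J$ is (the $\mathbb Q_p$-points of) an inner form of a Levi $M_\nu$ of $G$ — the centralizer of the Newton cocharacter $\nu$. Concretely one works inside a single apartment of $\mathbf B^e(G_K)$ associated to a maximal split torus $S$ whose cocharacter lattice contains a representative of $\nu$; there the condition $\mathbf d(\varphi(x),x)=\mu$ becomes a statement about lattice chains / cocharacters that is exactly the classical Mazur inequality. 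The key combinatorial input is the characterization of the dominance order: $\nu\leq\mu^\sharp$ means $\mu^\sharp-\nu$ is a nonnegative rational combination of positive coroots and $\mu^\sharp,\nu$ have the same image in $\pi_1(G)_{\mathbb R}$ (the Kottwitz point), and one must translate this into the existence of a $\varphi$-fixed filtration of type $\mu$.

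The honest way to do this, and the step I expect to be the main obstacle, is to run the argument on the building directly rather than re-deriving the group-theoretic Mazur statement. Starting from a point $x_0\in\Min(\varphi)$, so $\varphi(x_0)=x_0+\mathcal F_N^\iota$ with $t(\mathcal F_N^\iota)=-\nu$ (equivalently $\nu^\iota$ in the appropriate normalization), one wants to modify $x_0$ to $x$ so that $\mathbf d(\varphi(x),x)$ jumps from (the $\sigma$-average of) $\nu$ up to $\mu$. Writing $x=x_0+\mathcal G$ for a (to-be-determined) filtration $\mathcal G$, the quantity $\mathbf d(\varphi(x),x)=\mathbf d(x_0+\varphi(\mathcal G)+\mathcal F_N^\iota, x_0+\mathcal G)$ is controlled by the CAT(0) geometry and the $+$-operation; the inequality $\nu\leq\mu^\sharp$ is precisely what is needed to fit a filtration of type $\mu$ ``between'' the relevant facets, using convexity, the non-expanding property of $+$, and the fact that $J$ acts cocompactly so that only finitely many $J$-orbits of relevant chambers must be checked. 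I would organize this as: (i) reduce to $G$ quasi-split, then to an apartment; (ii) invoke the classical converse to Mazur's inequality for $GL_n$ / unramified groups via Kottwitz's $B(G,\mu)$ formalism \cite{Ga10,Ko03} as the combinatorial engine; (iii) transport the resulting cocharacter back to a point of $X_\varphi(\mu)$. The friction point is step (ii)–(iii): matching the normalizations between the $+$-action on $\mathbf B^e(G_K)$, the Kottwitz map, and the $\mu\mapsto\mu^\sharp$ averaging, and checking that the unramifiedness hypothesis (rather than mere quasi-splitness) is exactly what makes the classical input available; everything else is formal given Theorem~\ref{thm:Main1} and the cited results of \cite{CoNi12,Co13,Co14}.
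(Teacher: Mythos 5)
Your treatment of the forward direction is fine and matches the paper. But your proposed proof of the converse ($\nu\leq\mu^\sharp\Rightarrow X_\varphi(\mu)\neq\emptyset$) is circular: in step (ii) you invoke ``the classical converse to Mazur's inequality for $GL_n$ / unramified groups via Kottwitz's $B(G,\mu)$ formalism \cite{Ga10,Ko03} as the combinatorial engine,'' but that is precisely the statement being proved. The paper explicitly positions itself as giving a \emph{new} proof independent of \cite{Ga10,Ko03} (with a caveat that it proves slightly less — no strong divisibility), so you cannot use their result as an input. Your building-theoretic sketch of producing $x=x_0+\mathcal G$ with $\mathbf d(\varphi(x),x)=\mu$ by convexity and cocompactness is also too vague to substitute: you never explain where the hypothesis $\nu\leq\mu^\sharp$ enters quantitatively, and there is no mechanism proposed for actually constructing $\mathcal G$.

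The paper's actual argument is genuinely different and avoids any reduction to apartments or to the Kottwitz set. It uses the characterization $\nu\leq\mu^\sharp\iff\langle\mu^\sharp,z\rangle_\rho^{tr}\leq\langle\nu,z\rangle_\rho^{tr}$ for all $z\in\mathbf C^{\mathbb R}(G_K)$, where $\langle-,-\rangle_\rho^{tr}$ is the ``transverse'' (minimal) scalar product of relative positions from \cite[4.2.5]{Co14}. Since this pairing is $\sigma$-invariant and additive, $\langle\mu^\sharp,z\rangle_\rho^{tr}=\langle\mu,z\rangle_\rho^{tr}$ for any $\sigma$-fixed $z$, and in particular for $z=t(\Xi)$ with $\Xi$ a $\varphi$-stable filtration. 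Thus for \emph{any} $\mathcal F_H$ of type $\mu$ one has $\langle t(\mathcal F_H),t(\Xi)\rangle_\rho^{tr}\leq\langle\mathcal F_N,\Xi\rangle_\rho$ for all such $\Xi$. The missing link is then to find an $\mathcal F_H$ of type $\mu$ in \emph{generic (transverse) position} relative to every $\varphi$-stable $\Xi$, so that $\langle\mathcal F_H,\Xi\rangle_\rho=\langle t(\mathcal F_H),t(\Xi)\rangle_\rho^{tr}$, which then gives weak admissibility by criterion $(2^{\mathbb R})$ of Theorem~\ref{thm:Main1}. Such a generic $\mathcal F_H$ exists because the set of $\varphi$-stable $\mathbb R$-filtrations is a \emph{thin} subset of $\mathbf F^{\mathbb R}(G_K)$ in the sense of \cite[4.1.19]{Co14}: the decency of $\varphi$ (using $k$ algebraically closed) forces $\varphi$-stable filtrations to be defined over $\mathbb Q_{p^s}$, and the unramifiedness of $G$ (giving a reductive $\mathbb Z_p$-model) lets one apply the thinness criterion of \cite[Lemma 64]{Co14} to the whole of $\mathbf F^{\mathbb R}(G_{\mathbb Q_{p^s}})$. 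This is where both hypotheses are used — in a way quite unlike the ``matching of normalizations'' step you flagged. You should discard step (ii) of your plan and replace it with this genericity/thinness argument.
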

\begin{proof}
It only remains to show that $\nu\leq\mu^{\sharp}$ implies $\mathbf{Adm}_{\varphi}(\mu)\neq\emptyset$.
By~\cite[5.1.2]{Co14},
\[
\nu\leq\mu^{\sharp}\iff\forall z\in\mathbf{C}^{\mathbb{R}}(G_{K}):\,\left\langle \mu^{\sharp},z\right\rangle _{\tau}^{tr}\leq\left\langle \nu,z\right\rangle _{\tau}^{tr}
\]
where $\left\langle -,-\right\rangle _{\tau}^{tr}:\mathbf{C}^{\mathbb{R}}(G_{K})\times\mathbf{C}^{\mathbb{R}}(G_{K})\rightarrow\mathbb{R}$
is defined in~\cite[4.2.5]{Co14} and given by
\begin{eqnarray*}
\left\langle \mu_{1},\mu_{2}\right\rangle _{\tau}^{tr} & = & \inf\left\{ \left\langle \mathcal{F}_{1},\mathcal{F}_{2}\right\rangle _{\tau}:(\mathcal{F}_{1},\mathcal{F}_{2})\in\mathbf{F}^{\mathbb{R}}(G_{K})^{2},\,t(\mathcal{F}_{1},\mathcal{F}_{2})=(\mu_{1},\mu_{2})\right\} .
\end{eqnarray*}
This ``scalar product'' $\left\langle -,-\right\rangle _{\tau}^{tr}$
is $\sigma$-invariant and additive in both variables. It follows
that $\left\langle \mu^{\sharp},z\right\rangle _{\tau}^{tr}=\left\langle \mu,z\right\rangle _{\tau}^{tr}$
for every $z\in\mathbf{C}^{\mathbb{R}}(G_{K})$ fixed by $\sigma$,
so that 
\[
\nu\leq\mu^{\sharp}\Longrightarrow\forall z\in\mathbf{C}^{\mathbb{R}}(G_{K})^{\sigma=\mathrm{Id}}:\,\left\langle \mu,z\right\rangle _{\tau}^{tr}\leq\left\langle \nu,z\right\rangle _{\tau}^{tr}.
\]
Suppose therefore that $\nu\leq\mu^{\sharp}$. Then for any $\mathcal{F}_{H}\in\mathbf{F}^{\mathbb{R}}(G_{K})$
of type $t(\mathcal{F}_{H})=\mu$ and every $\varphi$-stable $\Xi\in\mathbf{F}^{\mathbb{R}}(G_{K})$,
since $t(\Xi)=z$ is fixed by $\sigma$ and $t(\mathcal{F}_{N})=\nu$,
\[
\left\langle t(\mathcal{F}_{H}),t(\Xi)\right\rangle _{\tau}^{tr}\leq\left\langle t(\mathcal{F}_{N}),t(\Xi)\right\rangle _{\tau}^{tr}\leq\left\langle \mathcal{F}_{N},\Xi\right\rangle _{\tau}.
\]
Thus $\mathcal{F}_{H}$ will belong to $\mathbf{Adm}_{\varphi}(\mu)$
if it happens to be in generic (= transverse) relative position \cite[4.2.5]{Co14}
with respect to every $\varphi$-stable $\Xi\in\mathbf{F}^{\mathbb{R}}(G_{K})$,
for then
\[
\left\langle \mathcal{F}_{H},\Xi\right\rangle _{\tau}=\left\langle t(\mathcal{F}_{H}),t(\Xi)\right\rangle _{\tau}^{tr}\leq\left\langle \mathcal{F}_{N},\Xi\right\rangle _{\tau}.
\]
It is therefore sufficient to show that the set of $\varphi$-stable
$\mathbb{R}$-filtrations on $V_{K}$ is a thin subset of $\mathbf{F}^{\mathbb{R}}(G_{K})$
in the sense of \cite[4.1.19]{Co14}. Since $k$ is algebraically
closed, we may assume that $\varphi$ satisfies a decency equation
as in~\cite[2.8]{CoNi16} for some integer $s>0$, in which case
every $\varphi$-stable filtration is also fixed by $\sigma^{s}$,
i.e.~defined over the fixed field $\mathbb{Q}_{p^{s}}$ of $\sigma^{s}$
in $K$, a finite (unramified) extension of $\mathbb{Q}_{p}$. Now
since $G$ is unramified over $\mathbb{Q}_{p}$, it admits a reductive
model over $\mathbb{Z}_{p}$, which we also denote by $G$. We may
now apply the thinness criterion of~\cite[4.1.19]{Co14}, thus obtaining
that the whole of $\mathbf{F}^{\mathbb{R}}(G_{\mathbb{Q}_{p^{s}}})$
is a thin subset of $\mathbf{F}^{\mathbb{R}}(G_{K})$, which proves
the theorem.
\end{proof}

\section{Filtered Isocrystals\label{sec:EquivForWeaklyAdm}}

\subsection{~}

The hardest implication in Theorem~\ref{thm:Main1} seems to be $(2^{\Delta})\Rightarrow(1)$:
\[
(2^{\Delta})\Rightarrow(2)\Rightarrow(2')\Rightarrow(4')\Rightarrow(4'_{\tau})\Rightarrow(3'_{\tau})\Rightarrow(2'_{\tau})\Rightarrow(2_{\tau})\Rightarrow(1_{\tau})\Rightarrow(1).
\]
The key steps are: the fixed point theorem of \cite{Co13}, used in
$(2')\Rightarrow(4')$, and some functoriality of Bruhat-Tits buildings,
used in $(4')\Rightarrow(4'_{\tau})$. However, the theory of Harder-Narasimhan
filtrations provides an easier proof of a more general result.

\subsection{~}

Fix an extension $L$ of $K=W(k)[\frac{1}{p}]$ and denote by $\Fil_{L}^{\Gamma}\Iso(k)$
the category of triples $(V,\varphi,\mathcal{F})$ where $(V,\varphi)$
is an isocrystal over $k$ and $\mathcal{F}=\mathcal{F}_{H}$ is a
$\Gamma$-filtration on $V_{L}=V\otimes_{K}L$. It is an exact $\otimes$-category
equipped with Harder-Narasimhan filtrations attached to the slope
function which is defined (for $V\neq0$) by
\[
\mu(V,\varphi,\mathcal{F})=\frac{\deg(\mathcal{F}_{H})-\deg(\mathcal{F}_{N})}{\dim_{K}V}\in\mathbb{Q}[\Gamma]\subset\mathbb{R}.
\]
Here $\mathbb{Q}[\Gamma]$ is the $\mathbb{Q}$-subspace of $\mathbb{R}$
spanned by $\Gamma$. We now have four filtrations at our disposal:
the \emph{Hodge} $\Gamma$-filtration $\mathcal{F}=\mathcal{F}_{H}$
on $V_{L}$, the $\varphi$-stable (opposed) \emph{Newton} $\mathbb{Q}$-filtrations
$\mathcal{F}_{N}$ and $\mathcal{F}_{N}^{\iota}$ on $V=V_{K}$, and
the $\varphi$-stable \emph{Harder-Narasimhan} $\mathbb{Q}[\Gamma]$-filtration
$\mathcal{F}_{HN}=\mathcal{F}_{HN}(V,\varphi,\mathcal{F})$ on $V=V_{K}$. 
\begin{lem}
\label{lem:DefWeakAdmEquScProdv2}The following conditions are equivalent:

\begin{enumerate}
\item $(V,\varphi,\mathcal{F})$ is weakly admissible, i.e.: 

\begin{enumerate}
\item $\deg(\mathcal{F}_{H})=\deg(\mathcal{F}_{N})$ and 
\item $\deg(\mathcal{F}_{H}\vert W)\leq\deg(\mathcal{F}_{N}\vert W)$ for
every $\varphi$-stable $K$-subspace $W$ of $V$, 
\end{enumerate}
\item For every $\varphi$-stable $\Delta$-filtration $\Xi$ on $V$, $\left\langle \mathcal{F}_{H},\Xi\right\rangle \leq\left\langle \mathcal{F}_{N},\Xi\right\rangle $.
\item For every $\varphi$-stable $\Delta$-filtration $\Xi$ on $V$, $\left\langle \mathcal{F}_{H},\Xi\right\rangle +\left\langle \mathcal{F}_{N}^{\iota},\Xi\right\rangle \leq0$.
\item $\left\langle \mathcal{F}_{H},\mathcal{F}_{HN}\right\rangle \leq\left\langle \mathcal{F}_{N},\mathcal{F}_{HN}\right\rangle $.
\item $\left\langle \mathcal{F}_{H},\mathcal{F}_{HN}\right\rangle +\left\langle \mathcal{F}_{N}^{\iota},\mathcal{F}_{HN}\right\rangle \leq0$.
\item $\mathcal{F}_{HN}=0$.
\end{enumerate}
In $(2)$ and $(3)$, $\Delta$ is any non-trivial subgroup of $\mathbb{R}$. 
\end{lem}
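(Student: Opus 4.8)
The plan is to prove the cycle of implications
\[
(6)\Rightarrow(1)\Rightarrow(2)\Leftrightarrow(3)\Rightarrow(4)\Leftrightarrow(5)\Rightarrow(6),
\]
with the equivalences $(2)\Leftrightarrow(3)$ and $(4)\Leftrightarrow(5)$ being immediate from the Remark following Lemma~\ref{lem:DefWeakAdmEquScProd} (restated here): one has $\langle\mathcal{F}_N,\Xi\rangle+\langle\mathcal{F}_N^\iota,\Xi\rangle=0$ for every $\varphi$-stable $\mathbb{R}$-filtration $\Xi$ on $V$, by functoriality of the slope $\mathbb{Q}$-graduation applied to the subspaces $\Xi^{\delta}$ exactly as in the proof of Lemma~\ref{lem:DefWeakAdmEquScProd}; applying this with $\Xi=\mathcal{F}_{HN}$ gives $(4)\Leftrightarrow(5)$. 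The implications $(1)\Leftrightarrow(2)\Leftrightarrow(3)$ are Lemma~\ref{lem:DefWeakAdmEquScProd} itself (taking, say, $\Delta=\mathbb{Q}[\Gamma]$ or any non-trivial subgroup of $\mathbb{R}$ — the content of that lemma is precisely the equivalence of (1), (2), (3) here). So the genuinely new work is the interaction with the Harder-Narasimhan filtration: $(2)\Rightarrow(4)$ is trivial since $\mathcal{F}_{HN}$ is itself a particular $\varphi$-stable $\Delta$-filtration on $V$; the real points are $(6)\Rightarrow(1)$ and $(4)\Rightarrow(6)$.

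For $(6)\Rightarrow(1)$, I would unwind the definition of $\mathcal{F}_{HN}$ via the slope function $\mu(V,\varphi,\mathcal{F})=(\deg(\mathcal{F}_H)-\deg(\mathcal{F}_N))/\dim_K V$. If $\mathcal{F}_{HN}=0$ then $(V,\varphi,\mathcal{F})$ is semistable of slope $0$, which forces $\mu(V,\varphi,\mathcal{F})=0$, i.e.\ $\deg(\mathcal{F}_H)=\deg(\mathcal{F}_N)$, giving (1a); and for any $\varphi$-stable $K$-subspace $W\subset V$, the sub-object $(W,\varphi|W,\mathcal{F}_H|W_L)$ has slope $\leq 0$ by semistability, i.e.\ $\deg(\mathcal{F}_H|W)\leq\deg(\mathcal{F}_N|W)$, which is (1b). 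Here one uses that $\deg(\mathcal{F}_N|W)$ computed for the sub-isocrystal agrees with the restriction of the Newton filtration, which holds because the slope decomposition is functorial for $\varphi$-stable subspaces (already invoked in Lemma~\ref{lem:DefWeakAdmEquScProd}).

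For $(4)\Rightarrow(6)$ — which I expect to be the main obstacle — the idea is a standard Harder-Narasimhan/Totaro-type computation. Write the HN-filtration of $(V,\varphi,\mathcal{F})$ with jumps $\lambda_1>\cdots>\lambda_n$ and graded pieces the semistable subquotients, so that $\mathcal{F}_{HN}^{\lambda}=\oplus_{\lambda_i\geq\lambda}(\text{piece})$ as a $\varphi$-stable filtration. Using the two "seesaw" expansions of the scalar product from the proof of Lemma~\ref{lem:DefWeakAdmEquScProd} (namely $\langle\mathcal{F}_\star,\mathcal{F}_{HN}\rangle=\sum_i (\lambda_i-\lambda_{i-1})\,\deg(\mathcal{F}_\star|\mathcal{F}_{HN}^{\lambda_i})$), condition (4) reads
\[
\sum_{i=1}^{n}(\lambda_i-\lambda_{i-1})\bigl(\deg(\mathcal{F}_H|\mathcal{F}_{HN}^{\lambda_i})-\deg(\mathcal{F}_N|\mathcal{F}_{HN}^{\lambda_i})\bigr)\leq 0,
\]
with $\lambda_0:=$ suitable normalization and all coefficients $\lambda_i-\lambda_{i-1}>0$ for $i\geq 2$, while $\deg(\mathcal{F}_H|\mathcal{F}_{HN}^{\lambda_i})-\deg(\mathcal{F}_N|\mathcal{F}_{HN}^{\lambda_i})$ equals $\dim$ times the slope-weighted mass of the top $i$ HN-pieces, which by definition of the HN-filtration is $\geq$ (and strictly $>$ unless that step is absent) a positive multiple of $\lambda_i$ minus the overall degree term. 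Concretely: each term $\deg(\mathcal{F}_H|\mathcal{F}_{HN}^{\lambda_i})-\deg(\mathcal{F}_N|\mathcal{F}_{HN}^{\lambda_i})=\sum_{j\leq i}\dim(\text{piece }j)\cdot\lambda_j$, a sum of \emph{non-negative} quantities whenever $\lambda_j\geq 0$; together with $\deg(\mathcal{F}_H)-\deg(\mathcal{F}_N)=\sum_j\dim(\text{piece }j)\cdot\lambda_j=0$ when $\mathcal F_{HN}$ has total slope zero — one shows the left side of the displayed inequality is in fact $\geq 0$, with equality forcing all $\lambda_i=0$, hence $\mathcal{F}_{HN}=0$. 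I would package this convexity argument cleanly: the map $i\mapsto\deg(\mathcal{F}_H|\mathcal{F}_{HN}^{\lambda_i})-\deg(\mathcal{F}_N|\mathcal{F}_{HN}^{\lambda_i})$ is the value at the HN-break points of the concave "HN-polygon minus Newton-polygon"; condition (4) says a certain slope-pairing of this polygon with the $\lambda_i$ is $\leq 0$, and concavity plus the endpoint conditions force the polygon to be identically zero, i.e.\ $\mathcal{F}_{HN}=0$. The subtlety to get right is the sign bookkeeping at the endpoints $i=1$ and $i=n$ (the terms $\Delta_1=\lambda_1$ and $\Delta_n$), which is precisely handled by the two dual expansions of $\langle-,-\rangle$ recorded in the proof of Lemma~\ref{lem:DefWeakAdmEquScProd}; I would reuse that computation verbatim rather than redo it.
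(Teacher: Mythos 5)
Your overall architecture $(6)\Rightarrow(1)\Rightarrow(2)\Leftrightarrow(3)\Rightarrow(4)\Leftrightarrow(5)\Rightarrow(6)$ is sound, and the routine parts — $(1)\Leftrightarrow(2)\Leftrightarrow(3)$ and $(4)\Leftrightarrow(5)$ by the same manipulations as in Lemma~\ref{lem:DefWeakAdmEquScProd}, $(2)\Rightarrow(4)$ by specializing $\Xi=\mathcal{F}_{HN}$, and $(1)\Leftrightarrow(6)$ by the definition of semistability of slope $0$ — all match the paper. The gap is in $(4)\Rightarrow(6)$, which you correctly flag as the main obstacle but never close. Your convexity sketch invokes hypotheses you don't have: you write ``a sum of non-negative quantities whenever $\lambda_j\geq0$'' and ``when $\mathcal{F}_{HN}$ has total slope zero'', but neither $\lambda_j\geq0$ nor total degree zero is given — the latter is equivalent to (1a), which is part of what you are trying to establish. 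The HN slopes can be of either sign, and the HN polygon is compared against the line of slope $\mu(V)$ through the origin, not against the Newton polygon, so the ``HN minus Newton'' framing doesn't produce the concave polygon you want. You also have a sign issue in the index bookkeeping: you use decreasing slopes $\lambda_1>\cdots>\lambda_n$, so the differences $\lambda_i-\lambda_{i-1}$ are negative, whereas the positivity of the increments $\Delta_i$ is exactly what makes the argument of Lemma~\ref{lem:DefWeakAdmEquScProd} tick.

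The paper closes $(4)\Rightarrow(6)$ with one clean identity that bypasses all endpoint bookkeeping. Using the second expression for the scalar product from \S\ref{sub:GrFil} and the defining property of the HN filtration (each graded piece $\Gr_{\mathcal{F}_{HN}}^{\gamma}$ is semistable of slope exactly $\gamma$), one has
\begin{align*}
\left\langle \mathcal{F}_{H},\mathcal{F}_{HN}\right\rangle -\left\langle \mathcal{F}_{N},\mathcal{F}_{HN}\right\rangle
&= \sum_{\gamma}\gamma\cdot\bigl(\deg\Gr_{\mathcal{F}_{HN}}^{\gamma}(\mathcal{F}_{H})-\deg\Gr_{\mathcal{F}_{HN}}^{\gamma}(\mathcal{F}_{N})\bigr)\\
&= \sum_{\gamma}\gamma\cdot\dim_{K}\bigl(\Gr_{\mathcal{F}_{HN}}^{\gamma}\bigr)\cdot\mu\bigl(\Gr_{\mathcal{F}_{HN}}^{\gamma}\bigr)
= \sum_{\gamma}\gamma^{2}\dim_{K}\bigl(\Gr_{\mathcal{F}_{HN}}^{\gamma}\bigr)
= \left\langle \mathcal{F}_{HN},\mathcal{F}_{HN}\right\rangle .
\end{align*}
Thus (4) forces $\left\Vert\mathcal{F}_{HN}\right\Vert^{2}\leq0$, i.e.\ $\mathcal{F}_{HN}=0$, with no case analysis and no a priori assumption on the signs of the slopes. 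You already had the crucial observation $\deg(\mathcal{F}_H\vert W)-\deg(\mathcal{F}_N\vert W)=\dim_{K}W\cdot\mu(W)$; you just needed to apply it to the graded pieces of $\mathcal{F}_{HN}$ rather than to the steps $\mathcal{F}_{HN}^{\lambda_i}$ of the filtration.
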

\begin{proof}
One proves $(1)\Leftrightarrow(2)\Leftrightarrow(3)$ and $(4)\Leftrightarrow(5)$
as in Lemma~\ref{lem:DefWeakAdmEquScProd}, moreover $(1)\Leftrightarrow(6)$
by definition of $\mathcal{F}_{HN}$ and obviously $(6)\Rightarrow(4),(5)$.
Finally $(4)\Rightarrow(6)$ because:
\begin{eqnarray*}
\left\langle \mathcal{F}_{H},\mathcal{F}_{HN}\right\rangle -\left\langle \mathcal{F}_{N},\mathcal{F}_{HN}\right\rangle  & = & \sum_{\gamma}\gamma\cdot\left(\deg\Gr_{\mathcal{F}_{HN}}^{\gamma}(\mathcal{F}_{H})-\deg\Gr_{\mathcal{F}_{HN}}^{\gamma}(\mathcal{F}_{N})\right)\\
 & = & \sum_{\gamma}\gamma\cdot\dim_{K}\left(\Gr_{\mathcal{F}_{HN}}^{\gamma}\right)\cdot\mu\left(\Gr_{\mathcal{F}_{HN}}^{\gamma}\right)\\
 & = & \sum_{\gamma}\gamma^{2}\cdot\dim_{K}\left(\Gr_{\mathcal{F}_{HN}}^{\gamma}\right)\\
 & = & \left\langle \mathcal{F}_{HN},\mathcal{F}_{HN}\right\rangle 
\end{eqnarray*}
using the definition of $\mathcal{F}_{HN}$ for the third equality.
\end{proof}

\subsection{~}

For a reductive group $G$ over $\mathbb{Q}_{p}$, a $\Gamma$-filtered
$G$-isocrystal over $L$ is an exact $\otimes$-functor factorization
$(\varphi,\mathcal{F})$ of the fiber functor $V_{K}:\Rep(G)\rightarrow\Vect(K)$
trough the natural fiber functor $\Fil_{L}^{\Gamma}\Iso(k)\rightarrow\Vect(K)$.
It again induces four filtrations on the relevant fiber functors:
the \emph{Hodge} $\Gamma$-filtration $\mathcal{F}_{H}=\mathcal{F}$
on $V_{L}$, the $\varphi$-stable (opposed) \emph{Newton} $\mathbb{Q}$-filtrations
$\mathcal{F}_{N}$ and $\mathcal{F}_{N}^{\iota}$ on $V_{K}$, and
the $\varphi$-stable \emph{Harder-Narasimhan} $\mathbb{Q}[\Gamma]$-filtration
$\mathcal{F}_{HN}=\mathcal{F}_{HN}(\varphi,\mathcal{F})$ on $V_{K}$
defined by 
\[
\forall\tau\in\Rep(G):\qquad\mathcal{F}_{HN}(\tau)=\mathcal{F}_{HN}(V_{K}(\tau),\varphi(\tau),\mathcal{F}(\tau)).
\]
It is not at all obvious that this formula indeed defines a filtration
on $V_{K}$: it does yield a factorization $\mathcal{F}_{HN}:\Rep(G)\rightarrow\Fil^{\mathbb{Q}[\Gamma]}(K)$
of $V_{K}$, but we have to check that the latter is exact and compatible
with the $\otimes$-products and their neutral objects. Since every
exact sequence in $\Rep(G)$ is split (we are in characteristic $0$),
exactness here amounts to additivity, which is obvious. The issue
is here the compatibility of Harder-Narasimhan filtrations with $\otimes$-products,
where we need to use the main result of \cite{To96}. We say that
$(\varphi,\mathcal{F}):\Rep(G)\rightarrow\Fil_{L}^{\Gamma}\Iso(k)$
is weakly admissible if it factors through the full subcategory $\Fil_{L}^{\Gamma}\Iso(k)^{wa}$
of weakly admissible objects. 

\subsection{~}

Fix a faithful $\tau\in\Rep(G)$ and some subgroup $\Delta\neq0$
of $\mathbb{R}$. Then:
\begin{thm}
\label{thm:Main1withL}The following conditions on $(\varphi,\mathcal{F})$
are equivalent:

\begin{lyxlist}{XXX}
\item [{$(1)$}] $(\varphi,\mathcal{F})$ is weakly admissible.
\item [{$(1_{\rho})$}] For every $\rho\in\Rep(G)$, $(V_{K}(\rho),\varphi(\rho),\mathcal{F}(\rho))$
is weakly admissible.
\item [{$(1_{\tau})$}] $(V_{K}(\tau),\varphi(\tau),\mathcal{F}(\tau))$
is weakly admissible.
\item [{$(2^{\Delta})$}] For every $\varphi$-stable $\Delta$-filtration
$\Xi$ on $V_{K}$, 
\[
\left\langle \mathcal{F}_{H},\Xi\right\rangle _{\tau}\leq\left\langle \mathcal{F}_{N},\Xi\right\rangle _{\tau}.
\]
\item [{$(2_{\rho}^{\Delta})$}] For every $\rho\in\Rep(G)$ and every
$\varphi$-stable $\Delta$-filtration $\Xi$ on $V_{K}(\rho)$, 
\[
\left\langle \mathcal{F}_{H}(\rho),\Xi\right\rangle \leq\left\langle \mathcal{F}_{N}(\rho),\Xi\right\rangle .
\]
\item [{$(2_{\tau}^{\Delta})$}] For every $\varphi$-stable $\Delta$-filtration
$\Xi$ on $V_{K}(\tau)$, 
\[
\left\langle \mathcal{F}_{H}(\tau),\Xi\right\rangle \leq\left\langle \mathcal{F}_{N}(\tau),\Xi\right\rangle .
\]
\item [{$(5)$}] $\mathcal{F}_{HN}=0$.
\item [{$(5_{\rho})$}] For every $\rho\in\Rep(G)$, $\mathcal{F}_{HN}(\rho)=0$.
\item [{$(5_{\tau})$}] $\mathcal{F}_{HN}(\tau)=0$.
\end{lyxlist}
\end{thm}
\begin{proof}
The following implications are obvious, or given by the previous lemma:
\[
\xymatrix{(1)\ar@{<=>}[r] & (1_{\rho})\ar@{=>}[r]\ar@{<=>}[d] & (1_{\tau})\ar@{<=>}[d]\\
 & (2_{\rho})\ar@{=>}[r]\ar@{<=>}[d] & (2_{\tau})\ar@{<=>}[d]\ar@{=>}[r] & (2^{\Delta})\\
(5)\ar@{<=>}[r] & (5_{\rho})\ar@{=>}[r] & (5_{\tau})
}
\]
We have $(2^{\Delta})\Leftrightarrow(2^{\mathbb{R}})\Leftrightarrow(2^{\mathbb{Q}[\Gamma]})$
as in \ref{subsec:2Dimplies2}, $(5_{\tau})\Rightarrow(5)$ by \cite[4.2.10 or 3.11.12]{Co14}
and finally $(2^{\mathbb{\mathbb{Q}}[\Gamma]})\Rightarrow(5_{\tau})$
since $(2^{\mathbb{Q}[\Gamma]})$ with $\Xi=\mathcal{F}_{HN}$ gives
\[
\left\langle \mathcal{F}_{H}(\tau),\mathcal{F}_{HN}(\tau)\right\rangle \leq\left\langle \mathcal{F}_{N}(\tau),\mathcal{F}_{HN}(\tau)\right\rangle 
\]
which implies $\mathcal{F}_{HN}(\tau)=0$ by the previous lemma.
\end{proof}

\subsection{~}

The above proof of $(2^{\Delta})\Rightarrow(1)$ looks much easier,
but it still uses Totaro's theorem to the effect that the Harder-Narasimhan
filtration is compatible with $\otimes$-products. Conversely, the
above theorem implies that weakly admissible objects are stable under
$\otimes$-product: apply $(1_{\tau})\Rightarrow(1_{\rho})$ with
$G=GL(V_{1})\times GL(V_{2})$, $\tau=\rho_{1}\boxplus\rho_{2}$ and
$\rho=\rho_{1}\boxtimes\rho_{2}$ with $\rho_{i}$ the tautological
representation of $GL(V_{i})$. In particular, Theorem~\ref{thm:Main1}
yields another proof of Totaro's result when $L=K$, which was of
course already known to Laffaille: if $M_{i}$ is a strongly divisible
lattice in $V_{i}$, then $M_{1}\otimes M_{2}$ is a strongly divisible
lattice in $V_{1}\otimes V_{2}$.

\section{Weakly admissible filtered isocrystals\label{sec:FarguesFilt}}

In this section, we define the Fargues $\mathbb{Q}$-filtration on
weakly admissible $\Gamma$-filtered isocrystals over $L$ (\ref{subsec:DefOfFarguesFiltr}),
show that it is compatible with tensor products (Theorem~\ref{thm:TensorProductFargues}),
and compute it as a convex projection~(Lemma~\ref{lem:CaractWeaklAdmFiltVectorCase}
and Proposition~\ref{prop:FarguesIsConvProj}).

\subsection{~\label{subsec:DefOfFarguesFiltr}}

Weakly admissible $\Gamma$-filtered isocrystals over $L$ have a
Harder-Narasimhan filtration of their own, defined by Fargues in~\cite[\S 9]{Fa12}
when $\Gamma=\mathbb{Z}$, for the slope function $\mu=\deg/\dim$,
where the degree function is now given by 
\[
\deg(V,\varphi,\mathcal{F})=-\deg(\mathcal{F}_{H})=-\deg(\mathcal{F}_{N})=\deg(\mathcal{F}_{N}^{\iota})\in\mathbb{Q}.
\]
An object $(V,\varphi,\mathcal{F})$ in $\Fil_{L}^{\Gamma}\Iso(k)^{wa}$
is thus again equipped with four filtrations: the \emph{Hodge} $\Gamma$-filtration
$\mathcal{F}=\mathcal{F}_{H}$ of $V_{L}$ by $L$-subspaces, the
pair of opposed \emph{Newton} $\mathbb{Q}$-filtrations $(\mathcal{F}_{N},\mathcal{F}_{N}^{\iota})$
of $V$ by $\varphi$-stable $K$-subspaces, and the \emph{Fargues
}$\mathbb{Q}$-filtration $\mathcal{F}_{F}$ of $V$ by ($\varphi$-stable)
weakly admissible $K$-subspaces. Note that the previous \emph{Harder-Narasimhan
}$\Gamma$-filtration $\mathcal{F}_{HN}$ on $V$ by $\varphi$-stable
$K$-subspaces is now trivial!

\subsection{~}

For a weakly admissible object $(V,\varphi,\mathcal{F})$ in $\Fil_{L}^{\Gamma}\Iso(k)$
and a subgroup $\Delta\neq0$ of $\mathbb{R}$, we denote by $\mathbf{F}^{\Delta}(V)$
the set of all $\Delta$-filtrations on $V$, by $\mathbf{F}^{\Delta}(V,\varphi)\subset\mathbf{F}^{\Delta}(V)$
its subset of $\Delta$-filtrations by $\varphi$-stable $K$-subspaces,
and by $\mathbf{F}^{\Delta}(V,\varphi,\mathcal{F})\subset\mathbf{F}^{\Delta}(V,\varphi)$
its subset of $\Delta$-filtrations by weakly admissible $\varphi$-stable
$K$-subspaces. Moreover, we equip $\mathbf{F}^{\mathbb{R}}(V)$ with
the $\varphi$-invariant CAT(0)-distance \cite[4.2.10]{Co14} defined
by 
\[
d(\mathcal{F}_{1},\mathcal{F}_{2})=\sqrt{\left\Vert \mathcal{F}_{1}\right\Vert ^{2}+\left\Vert \mathcal{F}_{2}\right\Vert ^{2}-2\left\langle \mathcal{F}_{1},\mathcal{F}_{2}\right\rangle }.
\]
Then $\mathbf{F}^{\mathbb{R}}(V,\varphi)$ and $\mathbf{F}^{\mathbb{R}}(V,\varphi,\mathcal{F})$
are closed and convex subsets of $\mathbf{F}^{\mathbb{R}}(V)$. In
fact:
\begin{lem}
\label{lem:CaractWeaklAdmFiltVectorCase}For $(V,\varphi,\mathcal{F})\in\Fil_{L}^{\Gamma}\Iso(k)^{wa}$
and a subgroup $0\neq\Delta\subset\mathbb{R}$ as above, 
\[
\mathbf{F}^{\Delta}(V,\varphi,\mathcal{F})=\left\{ \Xi\in\mathbf{F}^{\Delta}(V,\varphi):\left\langle \mathcal{F}_{H},\Xi\right\rangle =\left\langle \mathcal{F}_{N},\Xi\right\rangle \right\} .
\]
For $\Delta=\mathbb{R}$, this is a closed convex subset of $\mathbf{F}^{\mathbb{R}}(V)$.
Moreover $\mathcal{F}_{F}\in\mathbf{F}^{\mathbb{Q}}(V,\varphi,\mathcal{F})$
is the image of $\mathcal{F}_{N}^{\iota}\in\mathbf{F}^{\mathbb{Q}}(V,\varphi)$
under the convex projection $p:\mathbf{F}^{\mathbb{R}}(V,\varphi)\rightarrow\mathbf{F}^{\mathbb{R}}(V,\varphi,\mathcal{F})$.
\end{lem}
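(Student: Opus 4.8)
The plan is to treat the three assertions in turn: the displayed identity is a degree computation in the shape of the one in the proof of Lemma~\ref{lem:DefWeakAdmEquScProd}; closed--convexity then follows from it together with continuity of the scalar product and the lattice property of weakly admissible subspaces; and the identification $\mathcal{F}_{F}=p(\mathcal{F}_{N}^{\iota})$, the substantive point, will come from the variational characterization of Harder--Narasimhan filtrations. For the identity, I would take $\Xi\in\mathbf{F}^{\Delta}(V,\varphi)$ with jumps $\delta_{1}<\cdots<\delta_{n}$ and, exactly as in Lemma~\ref{lem:DefWeakAdmEquScProd}, write $\langle\mathcal{F}_{H},\Xi\rangle-\langle\mathcal{F}_{N},\Xi\rangle=\sum_{i=1}^{n}(\delta_{i}-\delta_{i-1})\bigl(\deg(\mathcal{F}_{H}\vert\Xi^{\delta_{i}})-\deg(\mathcal{F}_{N}\vert\Xi^{\delta_{i}})\bigr)$ with $\delta_{0}=0$. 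Weak admissibility of $(V,\varphi,\mathcal{F})$ kills the $i=1$ term (as $\Xi^{\delta_{1}}=V$ and $\deg\mathcal{F}_{H}=\deg\mathcal{F}_{N}$) and makes every other summand $\leq0$ with positive coefficient, so $\langle\mathcal{F}_{H},\Xi\rangle\leq\langle\mathcal{F}_{N},\Xi\rangle$ always, with equality if and only if $\deg(\mathcal{F}_{H}\vert\Xi^{\delta_{i}})=\deg(\mathcal{F}_{N}\vert\Xi^{\delta_{i}})$ for all $i$, that is, if and only if every member $\Xi^{\gamma}$ is a weakly admissible subobject of $(V,\varphi,\mathcal{F})$, which is precisely the condition $\Xi\in\mathbf{F}^{\Delta}(V,\varphi,\mathcal{F})$.

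For $\Delta=\mathbb{R}$, closedness is immediate since $\langle-,-\rangle$ is continuous on $\mathbf{F}^{\mathbb{R}}(V)$ (being $\tfrac12(\Vert-\Vert^{2}+\Vert-\Vert^{2}-d(-,-)^{2})$ with $\Vert\mathcal{F}\Vert=d(\mathcal{F},0)$): the set in question is the intersection of the closed set $\mathbf{F}^{\mathbb{R}}(V,\varphi)$ with the zero locus of the continuous function $\Xi\mapsto\langle\mathcal{F}_{N},\Xi\rangle-\langle\mathcal{F}_{H},\Xi\rangle$. For convexity I would observe that the weakly admissible $\varphi$-stable $K$-subspaces of $(V,\varphi,\mathcal{F})$ are exactly the subobjects of $(V,\varphi,\mathcal{F})$ in the abelian category $\Fil_{L}^{\Gamma}\Iso(k)^{wa}$, and that these, being the semistable-of-slope-zero subobjects of a semistable-of-slope-zero object, form a family $\mathcal{L}$ of subspaces stable under intersection and sum; hence $\mathbf{F}^{\mathbb{R}}(V,\varphi,\mathcal{F})$, the set of $\mathbb{R}$-filtrations of $V$ all of whose members lie in $\mathcal{L}$, is closed and convex in $\mathbf{F}^{\mathbb{R}}(V)$ by the same result of \cite{Co14} that gives closed-convexity of $\mathbf{F}^{\mathbb{R}}(V,\varphi)$.

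For the last assertion, write $C=\mathbf{F}^{\mathbb{R}}(V,\varphi,\mathcal{F})$. Since $\mathcal{F}_{F}$ is the Harder--Narasimhan filtration of $(V,\varphi,\mathcal{F})$ in $\Fil_{L}^{\Gamma}\Iso(k)^{wa}$, its steps $M_{j}$ are subobjects, hence members of $\mathcal{L}$, and its jumps $\lambda_{1}>\cdots>\lambda_{m}$ are rational, so $\mathcal{F}_{F}\in\mathbf{F}^{\mathbb{Q}}(V,\varphi,\mathcal{F})\subset C$. For $\Xi\in C$ each $\Gr_{\Xi}^{\gamma}$ is a weakly admissible subquotient, so by functoriality of the slope decomposition its Fargues degree is $\deg(\mathcal{F}_{N}^{\iota}\vert\Gr_{\Xi}^{\gamma})$, and summing over $\gamma$ and using the definition of $\langle-,-\rangle$ shows that the Fargues-degree functional $\Xi\mapsto\sum_{\gamma}\gamma\cdot(\text{Fargues deg of }\Gr_{\Xi}^{\gamma})$ equals $\langle\mathcal{F}_{N}^{\iota},\Xi\rangle$ on $C$; hence $d(\Xi,\mathcal{F}_{N}^{\iota})^{2}=\Vert\Xi\Vert^{2}-2\langle\mathcal{F}_{N}^{\iota},\Xi\rangle+\Vert\mathcal{F}_{N}^{\iota}\Vert^{2}$ on $C$, and it suffices to show $\mathcal{F}_{F}$ is the unique minimizer of $\Xi\mapsto\Vert\Xi\Vert^{2}-2\langle\mathcal{F}_{N}^{\iota},\Xi\rangle$ over $C$. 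Two standard Harder--Narasimhan facts do this: $\langle\mathcal{F}_{N}^{\iota},\mathcal{F}_{F}\rangle=\Vert\mathcal{F}_{F}\Vert^{2}$, because $M_{j}/M_{j-1}$ is Fargues-semistable of slope $\lambda_{j}$; and $\langle\mathcal{F}_{N}^{\iota},\Xi\rangle\leq\langle\mathcal{F}_{F},\Xi\rangle$ for every $\Xi\in C$, the Harder--Narasimhan inequality, obtained by Abel summation over the jumps of $\Xi$ from $\deg(\mathcal{F}_{N}^{\iota}\vert W)\leq\deg(\mathcal{F}_{F}\vert W)$ for $W\in\mathcal{L}$ — which in turn follows by restricting $\mathcal{F}_{F}$ to $W$ and bounding the Fargues degree of each $(W\cap M_{j})/(W\cap M_{j-1})$ by $\lambda_{j}\dim_{K}(\cdot)$, since it is a subobject of $M_{j}/M_{j-1}$. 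Combining, $\Vert\Xi\Vert^{2}-2\langle\mathcal{F}_{N}^{\iota},\Xi\rangle\geq\Vert\Xi\Vert^{2}-2\langle\mathcal{F}_{F},\Xi\rangle=d(\Xi,\mathcal{F}_{F})^{2}-\Vert\mathcal{F}_{F}\Vert^{2}\geq-\Vert\mathcal{F}_{F}\Vert^{2}=\Vert\mathcal{F}_{F}\Vert^{2}-2\langle\mathcal{F}_{N}^{\iota},\mathcal{F}_{F}\rangle$, with equality exactly when $d(\Xi,\mathcal{F}_{F})=0$; so $\mathcal{F}_{F}$ is the point of $C$ closest to $\mathcal{F}_{N}^{\iota}$, i.e.\ $\mathcal{F}_{F}=p(\mathcal{F}_{N}^{\iota})$.

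The main obstacle is this last step, specifically marshalling the Harder--Narasimhan formalism correctly: one must keep the two slope functions apart — the Fontaine--Rapoport slope, whose vanishing defines weak admissibility and the sublattice $\mathcal{L}$, and the Fargues slope, which governs $\mathcal{F}_{F}$ — and one uses throughout that $\Fil_{L}^{\Gamma}\Iso(k)^{wa}$ is abelian, so that subobjects, intersections, quotients and subquotients behave as asserted. If \cite{Co14} or \cite{Fa12} already records in sufficient generality that the Harder--Narasimhan filtration of an object is the projection of its co-degree filtration onto the cone of filtrations by subobjects, this step reduces to verifying the hypotheses of that statement.
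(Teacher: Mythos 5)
Your proof is correct, and the first two parts essentially reproduce the paper's argument; the third part is genuinely different, so let me compare.

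For the displayed identity you run the same Abel-summation computation as the paper, and for closedness the same continuity argument. For convexity you argue that the weakly admissible $\varphi$-stable subspaces form a sublattice $\mathcal{L}$ (stable under $+$ and $\cap$, as subobjects of a semistable object in an abelian HN category) and that filtrations by members of a sublattice form a convex set; this is true — one can see it from the formula $\Xi^{c}=\sum_{\gamma_1+\gamma_2\geq 2c}\Xi_1^{\gamma_1}\cap\Xi_2^{\gamma_2}$ for the midpoint of a geodesic — but your appeal to ``the same result of \cite{Co14}'' is not on target, since the reason $\mathbf{F}^{\mathbb{R}}(V,\varphi)$ is convex is that it is a fixed-point set of an isometry, which is a different mechanism. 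The paper instead rewrites the set as $\{\Xi\in\mathbf{F}^{\mathbb{R}}(V,\varphi):\left\langle\mathcal{F}_H,\Xi\right\rangle+\left\langle\mathcal{F}_N^{\iota},\Xi\right\rangle\geq 0\}$ and invokes the concavity of the scalar product from \cite[Corollary 73]{Co14}, which is cleaner and is the mechanism actually reused later in Lemma~\ref{lem:CaractWeakAdmFiltrG}.

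For the identification $\mathcal{F}_F=p(\mathcal{F}_N^\iota)$ the two proofs go in opposite directions. The paper starts from $\Xi=p(\mathcal{F}_N^\iota)$ and, by perturbing $\Xi$ in the direction of a weakly admissible subspace $W\subset\Gr_\Xi^{\delta_i}$ and exploiting the variational inequality defining the convex projection, deduces that each graded piece of $\Xi$ is Fargues-semistable of slope $\delta_i$ — hence $\Xi=\mathcal{F}_F$. You instead start from $\mathcal{F}_F$, show that the degree functional $\left\langle\mathcal{F}_N^\iota,-\right\rangle$ restricted to $C=\mathbf{F}^{\mathbb{R}}(V,\varphi,\mathcal{F})$ is the Fargues-degree pairing, and then use the two standard HN facts $\left\langle\mathcal{F}_N^\iota,\mathcal{F}_F\right\rangle=\left\Vert\mathcal{F}_F\right\Vert^2$ and $\left\langle\mathcal{F}_N^\iota,\Xi\right\rangle\leq\left\langle\mathcal{F}_F,\Xi\right\rangle$ for $\Xi\in C$ (the latter via Abel summation from $\deg(\mathcal{F}_N^\iota\vert W)\leq\deg(\mathcal{F}_F\vert W)$ for $W\in\mathcal{L}$, correctly derived from semistability of the $M_j/M_{j-1}$) to show $\mathcal{F}_F$ is the unique minimizer of distance to $\mathcal{F}_N^\iota$ in $C$. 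Your route makes visible the general principle that the HN filtration is the projection of the degree filtration onto the cone of filtrations by subobjects, at the cost of importing more of the HN formalism; the paper's route is more self-contained, re-deriving semistability of the graded pieces directly from the projection's first-order optimality. Both are correct.
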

\begin{proof}
For $\Xi\in\mathbf{F}^{\Delta}(V,\varphi)$, set $\{\delta\in\Delta:\Gr_{\Xi}^{\delta}\neq0\}=\{\delta_{1}<\cdots<\delta_{n}\}$.
Then $\Xi^{\delta_{i}}$ is a $\varphi$-stable $K$-subspace of $V$.
Put $d_{H}(i)=\deg(\mathcal{F}_{H}\vert\Xi_{K}^{\delta_{i}})$, $d_{N}(i)=\deg(\mathcal{F}_{N}\vert\Xi^{\delta_{i}})$.
Then 
\[
\left\langle \mathcal{F}_{H},\Xi\right\rangle -\left\langle \mathcal{F}_{N},\Xi\right\rangle =\sum_{i=1}^{n}\delta_{i}\cdot\left(\Delta_{H}(i)-\Delta_{N}(i)\right)=\sum_{i=1}^{n}\Delta_{i}\cdot(d_{H}(i)-d_{N}(i))
\]
where $\Delta_{\star}(i)=d_{\star}(i)-d_{\star}(i+1)$ for $1\leq i<n$,
$\Delta_{\star}(n)=d_{\star}(n)$, $\Delta_{i}=\delta_{i}-\delta_{i-1}$
for $1<i\leq n$ and $\Delta_{1}=\delta_{1}$, as in the proof of
lemma~\ref{lem:DefWeakAdmEquScProd}. We now know that $d_{H}(i)\leq d_{N}(i)$
for all $i$ with equality for $i=1$. Since $\Delta_{i}>0$ for $i>1$,
we obtain 
\[
\left\langle \mathcal{F}_{H},\Xi\right\rangle =\left\langle \mathcal{F}_{N},\Xi\right\rangle \iff\forall1\leq i\leq n:\quad d_{H}(i)=d_{N}(i).
\]
The displayed equality immediately follows, which may also be written
as
\begin{eqnarray*}
\mathbf{F}^{\Delta}(V,\varphi,\mathcal{F}) & = & \left\{ \Xi\in\mathbf{F}^{\Delta}(V,\varphi):\left\langle \mathcal{F}_{H},\Xi\right\rangle +\left\langle \mathcal{F}_{N}^{\iota},\Xi\right\rangle =0\right\} ,\\
 & = & \left\{ \Xi\in\mathbf{F}^{\Delta}(V,\varphi):\left\langle \mathcal{F}_{H},\Xi\right\rangle +\left\langle \mathcal{F}_{N}^{\iota},\Xi\right\rangle \geq0\right\} .
\end{eqnarray*}
Since the scalar product $\left\langle -,-\right\rangle $ is continuous
and concave (see \cite[4.2.10]{Co14}), this is (for $\Delta=\mathbb{R}$)
a closed and convex subset of the closed and convex subset $\mathbf{F}^{\mathbb{R}}(V,\varphi)$
of the CAT(0)-space $\mathbf{F}^{\mathbb{R}}(V)$. Take now $\Xi=p(\mathcal{F}_{N}^{\iota})$,
the convex projection of $\mathcal{F}_{N}^{\iota}$ to $\mathbf{F}^{\mathbb{R}}(V,\varphi,\mathcal{F})$.
For $1\leq i\leq n$, a weakly admissible subspace $W$ of $\Gr_{\Xi}^{\delta_{i}}$
and a small $\epsilon>0$, let $\Xi'=\Xi(i,W,\epsilon)$ be the $\varphi$-stable
$\mathbb{R}$-filtration on $V$ with $\Gr_{\Xi'}^{\delta}=\Gr_{\Xi}^{\delta}$
unless $\delta=\delta_{i}$ or $\delta_{i}+\epsilon$, where $\Gr_{\Xi'}^{\delta_{i}}=\Gr_{\Xi}^{\delta_{i}}/W$
and $\Gr_{\Xi'}^{\delta_{i}+\epsilon}=W$. By definition of $\Xi$,
\[
\left\langle \Xi,\Xi\right\rangle -2\left\langle \mathcal{F}_{N}^{\iota},\Xi\right\rangle \leq\left\langle \Xi',\Xi'\right\rangle -2\left\langle \mathcal{F}_{N}^{\iota},\Xi'\right\rangle ,
\]
which easily unfolds to the following inequality, valid for small
$\epsilon>0$: 
\[
\epsilon^{2}\dim_{K}W+2\epsilon\left(\delta_{i}\dim_{K}W-\deg(\mathcal{F}_{N}^{\iota}\vert W)\right)\geq0.
\]
It follows that $\delta_{i}\geq\mu(W)$. For $W=\Gr_{\Xi}^{\delta_{i}}$,
we may also allow negative $\epsilon$'s with small absolute values
in the above argument, this time obtaining $\delta_{i}=\mu(\Gr_{\Xi}^{\delta_{i}})$.
Therefore $\Gr_{\Xi}^{\delta_{i}}$ is $\mu$-semi-stable of slope
$\delta_{i}$ for all $1\leq i\leq n$, and $\Xi$ thus equals $\mathcal{F}_{F}$. 
\end{proof}

\subsection{~}

Let now $G$ be a reductive group over $\mathbb{Q}_{p}$ and let
\[
(\varphi,\mathcal{F}):\Rep(G)\rightarrow\Fil_{L}^{\Gamma}\Iso(k)^{wa}
\]
be a weakly admissible $\Gamma$-filtered $G$-isocrystal over $L$.
We denote by 
\[
\mathbf{F}^{\Delta}(G_{K},\varphi,\mathcal{F})\subset\mathbf{F}^{\Delta}(G_{K},\varphi)\subset\mathbf{F}^{\Delta}(G_{K})
\]
the set of $\Delta$-filtrations on the fiber functor $V_{K}:\Rep(G)\rightarrow\Vect(K)$
by respectively $\varphi$-stable and $\varphi$-stable weakly admissible
$K$-subspaces. We also fix a faithful representation $\tau$ of $G$
and equip $\mathbf{F}^{\mathbb{R}}(G_{K})$ with the CAT(0)-metric
defined by 
\[
d_{\tau}(\mathcal{F}_{1},\mathcal{F}_{2})=\sqrt{\left\Vert \mathcal{F}_{1}\right\Vert _{\tau}^{2}+\left\Vert \mathcal{F}_{2}\right\Vert _{\tau}^{2}-2\left\langle \mathcal{F}_{1},\mathcal{F}_{2}\right\rangle _{\tau}}.
\]

\begin{lem}
\label{lem:CaractWeakAdmFiltrG}With notations as above, 
\[
\mathbf{F}^{\Delta}(G_{K},\varphi,\mathcal{F})=\left\{ \Xi\in\mathbf{F}^{\Delta}(G_{K},\varphi):\left\langle \mathcal{F}_{H},\Xi\right\rangle _{\tau}=\left\langle \mathcal{F}_{N},\Xi\right\rangle _{\tau}\right\} .
\]
For $\Delta=\mathbb{R}$, this is a closed convex subset of $(\mathbf{F}^{\mathbb{R}}(G_{K}),d_{\tau})$. 
\end{lem}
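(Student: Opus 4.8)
The plan is to pass, via the faithful $\rho$ and Lemma~\ref{lem:CaractWeaklAdmFiltVectorCase}, from a condition on all $\tau\in\Rep(G)$ to a condition on $\rho$ alone, and then to propagate back by a Tannakian argument. Fix $\Xi\in\mathbf{F}^{\Delta}(G_{K},\varphi)$ and, for $\tau\in\Rep(G)$, set $c(\tau)=\left\langle \mathcal{F}_{H}(\tau),\Xi(\tau)\right\rangle +\left\langle \mathcal{F}_{N}^{\iota}(\tau),\Xi(\tau)\right\rangle$. Since $\Xi(\tau)$ is $\varphi(\tau)$-stable, the identity $\left\langle \mathcal{F}_{N},\Xi\right\rangle +\left\langle \mathcal{F}_{N}^{\iota},\Xi\right\rangle =0$ from the proof of Lemma~\ref{lem:DefWeakAdmEquScProd} rewrites $c(\tau)$ as $\left\langle \mathcal{F}_{H}(\tau),\Xi(\tau)\right\rangle -\left\langle \mathcal{F}_{N}(\tau),\Xi(\tau)\right\rangle$. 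Because $(\varphi,\mathcal{F})$ is weakly admissible each $(V_{K}(\tau),\varphi(\tau),\mathcal{F}(\tau))$ lies in $\Fil_{L}^{\Gamma}\Iso(k)^{wa}$, so Lemma~\ref{lem:CaractWeaklAdmFiltVectorCase} applies to it; comparing its two displayed descriptions of $\mathbf{F}^{\Delta}(V_{K}(\tau),\varphi(\tau),\mathcal{F}(\tau))$ yields, for our $\Xi$, that $c(\tau)\leq 0$ and that $\Xi(\tau)$ is a $\Delta$-filtration by weakly admissible $\varphi(\tau)$-stable subspaces iff $c(\tau)=0$. As $\Xi\in\mathbf{F}^{\Delta}(G_{K},\varphi,\mathcal{F})$ means exactly that the latter holds for every $\tau$, while the right-hand side of the lemma unwinds (using $\left\langle -,-\right\rangle _{\rho}=\left\langle -(\rho),-(\rho)\right\rangle$ and the same identity) to $\{\Xi\in\mathbf{F}^{\Delta}(G_{K},\varphi):c(\rho)=0\}$, it remains to prove the implication $c(\rho)=0\Rightarrow c(\tau)=0$ for all $\tau$.

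For this I would record the formal properties of $\tau\mapsto c(\tau)$: it is additive on short exact sequences, vanishes on the unit object, satisfies $c(\tau^{\vee})=c(\tau)$ (because $\left\langle \mathcal{G}_{1}(\tau^{\vee}),\mathcal{G}_{2}(\tau^{\vee})\right\rangle =\left\langle \mathcal{G}_{1}(\tau),\mathcal{G}_{2}(\tau)\right\rangle$ for filtrations $\mathcal{G}_{i}$ of the fiber functor, the filtrations of duals being the duals of the filtrations), and — the key point — obeys the Leibniz rule $c(\tau_{1}\otimes\tau_{2})=\dim(\tau_{2})\,c(\tau_{1})+\dim(\tau_{1})\,c(\tau_{2})$. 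The scalar product of tensor-product filtrations (see~\cite{Co14}) is \emph{not} itself Leibniz: $\left\langle \mathcal{G}_{1}(\tau_{1}\otimes\tau_{2}),\mathcal{G}_{2}(\tau_{1}\otimes\tau_{2})\right\rangle$ equals the naive Leibniz expression plus a correction $\deg\mathcal{G}_{1}(\tau_{1})\deg\mathcal{G}_{2}(\tau_{2})+\deg\mathcal{G}_{1}(\tau_{2})\deg\mathcal{G}_{2}(\tau_{1})$. Applying this to $(\mathcal{F}_{H},\Xi)$ and to $(\mathcal{F}_{N}^{\iota},\Xi)$ and adding, the two corrections combine into $(\deg\mathcal{F}_{H}(\tau_{i})+\deg\mathcal{F}_{N}^{\iota}(\tau_{i}))\cdot\deg\Xi(\tau_{j})$, which vanishes because weak admissibility of $(V_{K}(\tau),\varphi(\tau),\mathcal{F}(\tau))$ forces $\deg\mathcal{F}_{H}(\tau)=\deg\mathcal{F}_{N}(\tau)=-\deg\mathcal{F}_{N}^{\iota}(\tau)$. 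I expect this verification — pinning down the tensor-product behaviour of the scalar product and watching the hypothesis kill the correction — to be the main obstacle; everything else is formal.

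Granting the above, the rest is routine Tannakian bookkeeping. From $c(\rho)=c(\rho^{\vee})=0$ and Leibniz one gets $c(\rho^{\otimes a})=a\dim(\rho)^{a-1}c(\rho)=0$, $c((\rho^{\vee})^{\otimes b})=0$, hence $c(\rho^{\otimes a}\otimes(\rho^{\vee})^{\otimes b})=0$, and then $c(T)=0$ for every finite direct sum $T$ of such representations by additivity. Since $\rho$ is faithful, every $\tau\in\Rep(G)$ is a subquotient of some such $T$: writing $\tau=\tau_{1}/\tau_{2}$ with $\tau_{2}\subseteq\tau_{1}\subseteq T$ and applying additivity to $0\to\tau_{1}\to T\to T/\tau_{1}\to 0$ and $0\to\tau_{2}\to\tau_{1}\to\tau\to 0$, together with $c\leq 0$ everywhere, forces $c(\tau_{1})=c(T/\tau_{1})=c(\tau_{2})=c(\tau)=0$. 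This proves the displayed equality for arbitrary $\Delta$.

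For $\Delta=\mathbb{R}$, the displayed equality together with $c(\rho)\leq 0$ identifies $\mathbf{F}^{\mathbb{R}}(G_{K},\varphi,\mathcal{F})$ with the set of $\Xi\in\mathbf{F}^{\mathbb{R}}(G_{K},\varphi)$ satisfying $\left\langle \mathcal{F}_{H},\Xi\right\rangle _{\rho}+\left\langle \mathcal{F}_{N}^{\iota},\Xi\right\rangle _{\rho}\geq 0$. The functional $\Xi\mapsto\left\langle \mathcal{F}_{H},\Xi\right\rangle _{\rho}+\left\langle \mathcal{F}_{N}^{\iota},\Xi\right\rangle _{\rho}$ is continuous and concave on $(\mathbf{F}^{\mathbb{R}}(G_{K}),d_{\rho})$ by~\cite[Corollary 73]{Co14} (continuity and concavity of the scalar product in its second argument), so, $\mathbf{F}^{\mathbb{R}}(G_{K},\varphi)$ being closed and convex, the locus $\mathbf{F}^{\mathbb{R}}(G_{K},\varphi,\mathcal{F})$ is closed and convex.
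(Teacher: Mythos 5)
Your proof is correct, and the key step (the converse inclusion, i.e.\ that $c(\rho)=0$ implies $c(\tau)=0$ for all $\tau$) takes a genuinely different route from the paper's. The paper's argument is categorical: given $\Xi$ with $\langle\mathcal{F}_H,\Xi\rangle_\rho=\langle\mathcal{F}_N,\Xi\rangle_\rho$, one chooses a splitting of $\Xi$, transports $(\Gr_\Xi(\varphi),\Gr_\Xi(\mathcal{F}))$ along it to a new $\Gamma$-filtered $G$-isocrystal $(\varphi',\mathcal{F}')$ on $V_K$, observes via Lemma~\ref{lem:CaractWeaklAdmFiltVectorCase} that $(V_K(\rho),\varphi'(\rho),\mathcal{F}'(\rho))$ is weakly admissible, invokes $(1_\rho)\Rightarrow(1)$ of Theorem~\ref{thm:Main1withL} to conclude $(\varphi',\mathcal{F}')$ is weakly admissible, and then uses stability of $\Fil_L^\Gamma\Iso(k)^{wa}$ under extensions to deduce $\Xi\in\mathbf{F}^\Delta(G_K,\varphi,\mathcal{F})$. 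You instead prove the propagation directly by a Totaro-style Tannakian argument: the defect $c(\tau)=\langle\mathcal{F}_H(\tau),\Xi(\tau)\rangle+\langle\mathcal{F}_N^\iota(\tau),\Xi(\tau)\rangle$ is $\leq 0$ throughout (by Lemma~\ref{lem:CaractWeaklAdmFiltVectorCase} applied to each $\tau$), is additive, dual-invariant, and --- crucially --- Leibniz, the correction terms in the $\otimes$-formula~(\ref{eq:ScalProdofTensor}) canceling because weak admissibility forces $\deg\mathcal{F}_H(\tau)+\deg\mathcal{F}_N^\iota(\tau)=0$; then $c(\rho)=0$ propagates to the generating objects $\rho^{\otimes a}\otimes(\rho^\vee)^{\otimes b}$ and, by additivity and the sign constraint $c\leq 0$, to all subquotients, hence to all of $\Rep(G)$. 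Your route has the merit of being self-contained at this point: it does not invoke Theorem~\ref{thm:Main1withL} (hence avoids, for this lemma, the indirect dependence on the $\mathcal{F}_{HN}$-machinery and Totaro's $\otimes$-theorem that lies behind $(1_\rho)\Rightarrow(1)$), at the cost of some explicit bookkeeping with the Leibniz rule and duality that the paper sidesteps by quoting its earlier theorem. The closed/convex part for $\Delta=\mathbb{R}$ is identical in both.
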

\begin{proof}
If $\Xi\in\mathbf{F}^{\Delta}(G_{K},\varphi,\mathcal{F})$, then $\Xi(\tau)\in\mathbf{F}^{\Delta}(V_{K}(\tau),\varphi(\tau),\mathcal{F}(\tau))$,
thus
\[
\left\langle \mathcal{F}_{H},\Xi\right\rangle _{\tau}=\left\langle \mathcal{F}_{H}(\tau),\Xi(\tau)\right\rangle =\left\langle \mathcal{F}_{N}(\tau),\Xi(\tau)\right\rangle =\left\langle \mathcal{F}_{N},\Xi\right\rangle _{\tau}
\]
by the previous lemma. Suppose conversely that $\Xi\in\mathbf{F}^{\Delta}(G_{K},\varphi)$
satisfies this equation, choose a splitting of $\Xi$ (which exists
by~\cite[3.11.3]{Co14}) and use it to transport $(\Gr_{\Xi}(\varphi),\Gr_{\Xi}(\mathcal{F}))$
back from the fiber functor $\Gr_{\Xi}(V_{K})$ to $(\varphi',\mathcal{F}')$
on $V_{K}$. Since $\Fil_{L}^{\Gamma}\Iso(k)^{wa}$ is a strictly
full subcategory of $\Fil_{L}^{\Gamma}\Iso(k)$ which is stable under
extensions, it is sufficient to establish that $(\varphi',\mathcal{F}')$
is weakly admissible, and this now follows from $(1_{\tau})\Rightarrow(1)$
of theorem~\ref{thm:Main1withL} since $(V_{K}(\tau),\varphi'(\tau),\mathcal{F}'(\tau))$
is weakly admissible by the previous lemma. For $\Delta=\mathbb{R}$,
one shows as above that 
\begin{eqnarray*}
\mathbf{F}^{\mathbb{R}}(G_{K},\varphi,\mathcal{F}) & = & \left\{ \Xi\in\mathbf{F}^{\mathbb{R}}(G_{K},\varphi):\left\langle \mathcal{F}_{H},\Xi\right\rangle _{\tau}+\left\langle \mathcal{F}_{N}^{\iota},\Xi\right\rangle _{\tau}=0\right\} \\
 & = & \left\{ \Xi\in\mathbf{F}^{\mathbb{R}}(G_{K},\varphi):\left\langle \mathcal{F}_{H},\Xi\right\rangle _{\tau}+\left\langle \mathcal{F}_{N}^{\iota},\Xi\right\rangle _{\tau}\geq0\right\} 
\end{eqnarray*}
is a closed convex subset of $\mathbf{F}^{\mathbb{R}}(G_{K},\varphi)$.
\end{proof}

\subsection{~}

We may now establish the analog of Totaro's tensor product theorem
for the Fargues filtration $\mathcal{F}_{F}$. In the classical setting
of $p$-adic Hodge theory where $\Gamma=\mathbb{Z}$ and $L$ is a
totally ramified finite algebraic extension of $K$, an entirely different
proof is given in~\cite[\S 9.2]{Fa12}, using Fontaine's functor
to work on the Galois side.
\begin{thm}
\label{thm:TensorProductFargues}The Fargues filtration defines a
$\otimes$-functor
\[
\mathcal{F}_{F}:\Fil_{L}^{\Gamma}\Iso(k)^{wa}\rightarrow\Fil_{K}^{\mathbb{Q}}.
\]
\end{thm}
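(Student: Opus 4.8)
The statement has a formal part and a substantial one. That $\mathcal{F}_{F}$ is a functor into $\Fil_{K}^{\mathbb{Q}}$ is merely the functoriality of Harder-Narasimhan filtrations: the category $\Fil_{L}^{\Gamma}\Iso(k)^{wa}$ carries such filtrations for Fargues' slope $\mu=\deg/\dim$ (recalled above; since $\deg=-\deg(\mathcal{F}_{H})\in\mathbb{Q}$ on weakly admissible objects, the jumps of $\mathcal{F}_{F}$ do lie in $\mathbb{Q}$), and the usual ``no nonzero map from a semistable of larger slope to one of smaller slope'' argument shows that a morphism respects them. The content is compatibility with $\otimes$-products. As $\mathcal{F}_{F}(X_{1}\otimes X_{2})$ and $\mathcal{F}_{F}(X_{1})\otimes\mathcal{F}_{F}(X_{2})$ are two $\mathbb{Q}$-filtrations on the same $K$-vector space, this just means they coincide; and by the standard formalism of Harder-Narasimhan filtrations in exact $\otimes$-categories (compare \cite{To96}) it suffices, functoriality being known, to show that the tensor product of two $\mu$-semistable objects is $\mu$-semistable, the slopes then adding automatically by additivity of $\deg$. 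Indeed the graded pieces of $\mathcal{F}_{F}(X_{1})\otimes\mathcal{F}_{F}(X_{2})$ are the $\bigoplus_{i+j=\gamma}\Gr^{i}_{\mathcal{F}_{F}(X_{1})}\otimes\Gr^{j}_{\mathcal{F}_{F}(X_{2})}$, each a direct sum of semistables of slope $\gamma$, hence semistable of slope $\gamma$; this filtration therefore has semistable graded pieces with strictly decreasing slopes and so is the Harder-Narasimhan filtration $\mathcal{F}_{F}(X_{1}\otimes X_{2})$.

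Fix then $X_{i}=(V_{i},\varphi_{i},\mathcal{F}_{i})$ weakly admissible and $\mu$-semistable. By Lemma~\ref{lem:CaractWeaklAdmFiltVectorCase}, $\mathcal{F}_{F}(X_{i})$ is the convex projection of the Newton-opposed filtration $\mathcal{F}_{N,i}^{\iota}$ onto the closed convex cone $\mathbf{F}^{\mathbb{R}}(V_{i},\varphi_{i},\mathcal{F}_{i})$ of weakly admissible $\varphi_{i}$-stable $\mathbb{R}$-filtrations on $V_{i}$. Unwinding what it means for $\mathcal{F}_{F}(X_{i})$ to have a single jump, by means of the description of that cone given in the same lemma (by the same $2$-step-filtration argument as in the proofs of Lemmas~\ref{lem:DefWeakAdmEquScProd} and \ref{lem:CaractWeaklAdmFiltVectorCase}), semistability of $X_{i}$ becomes the numerical condition $\deg(\mathcal{F}_{N,i}^{\iota}\vert W)\leq\mu_{i}\dim_{K}W$ for every weakly admissible $\varphi_{i}$-stable $K$-subspace $W\subseteq V_{i}$, with $\mu_{i}=\deg(\mathcal{F}_{N,i}^{\iota})/\dim_{K}V_{i}$. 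Since the slope decomposition and $\iota$ are $\otimes$-functors, $\mathcal{F}_{N,12}^{\iota}=\mathcal{F}_{N,1}^{\iota}\boxtimes\mathcal{F}_{N,2}^{\iota}$ for the tensor product, and the same reformulation reduces the whole theorem to the single inequality
\[
\deg\bigl((\mathcal{F}_{N,1}^{\iota}\boxtimes\mathcal{F}_{N,2}^{\iota})\vert W\bigr)\leq(\mu_{1}+\mu_{2})\dim_{K}W
\]
for every weakly admissible $\varphi_{12}$-stable $K$-subspace $W$ of $V_{1}\otimes V_{2}$.

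When $W=W_{1}\otimes W_{2}$ with $W_{i}\subseteq V_{i}$ weakly admissible and $\varphi_{i}$-stable, this is immediate: writing $\mathcal{F}_{N,1}^{\iota}\boxtimes\mathcal{F}_{N,2}^{\iota}$ as the sum of the commuting filtrations $\mathcal{F}_{N,1}^{\iota}\otimes\mathrm{id}$ and $\mathrm{id}\otimes\mathcal{F}_{N,2}^{\iota}$ one gets $\deg\bigl((\mathcal{F}_{N,1}^{\iota}\boxtimes\mathcal{F}_{N,2}^{\iota})\vert W_{1}\otimes W_{2}\bigr)=\dim_{K}W_{2}\cdot\deg(\mathcal{F}_{N,1}^{\iota}\vert W_{1})+\dim_{K}W_{1}\cdot\deg(\mathcal{F}_{N,2}^{\iota}\vert W_{2})$ and adds the two hypotheses. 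The difficulty is that a weakly admissible $\varphi_{12}$-stable $W$ need not be of this form, and mere concavity of the scalar product only bounds the left-hand side from below. This is exactly where Totaro's $\otimes$-product theorem is needed, and the plan is to give its building-theoretic translation: encode $W$ as a $\varphi_{12}$-stable two-step filtration $\Xi$ on $V_{1}\otimes V_{2}$, transport the problem to the extended Bruhat-Tits building of $G=GL(V_{1})\times GL(V_{2})$ along the tensor morphism attached to $\tau=\rho_{1}\boxtimes\rho_{2}$ (the functoriality of \cite[5.8.4]{Co14} used in \ref{sub:4implies4tau}), and exploit the concavity of the scalar product (\cite[Corollary 73]{Co14}) together with the cone structure to reduce the general inequality to the tensor-decomposable case. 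Equivalently, and using Lemma~\ref{lem:CaractWeakAdmFiltrG}, what must be checked is that the convex projection onto the product cone $\mathbf{F}^{\mathbb{R}}(V_{1},\varphi_{1},\mathcal{F}_{1})\times\mathbf{F}^{\mathbb{R}}(V_{2},\varphi_{2},\mathcal{F}_{2})$ is intertwined, under $(\Theta_{1},\Theta_{2})\mapsto\Theta_{1}\boxtimes\Theta_{2}$, with the convex projection onto the a priori larger cone $\mathbf{F}^{\mathbb{R}}(V_{1}\otimes V_{2},\varphi_{12},\mathcal{F}_{12})$ --- so that the Fargues filtration of the weakly admissible $\Gamma$-filtered $G$-isocrystal attached to $(X_{1},X_{2})$ restricts along $\rho_{i}$ to $\mathcal{F}_{F}(X_{i})$ and along $\tau$ to $\mathcal{F}_{F}(X_{1}\otimes X_{2})$. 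This compatibility of convex projections with the tensor map is the only non-formal step, and is the one I expect to be the main obstacle. Finally, running the same argument for an arbitrary reductive $G$ (with a faithful $\rho$ and arbitrary $\tau\in\Rep(G)$) shows at one stroke that the Fargues filtration of any weakly admissible $\Gamma$-filtered $G$-isocrystal over $L$ is a genuine $\mathbb{Q}$-filtration on the fiber functor $V_{K}$, which is the Tannakian form of the theorem; the case $G=GL(V_{1})\times GL(V_{2})$, $L=K$ recovers Totaro's result and, through strongly divisible lattices, Laffaille's.
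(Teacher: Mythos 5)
Your reduction of the theorem to ``tensor product of semistables is semistable of additive slope'' is correct, as is the easy decomposable case $W = W_{1}\otimes W_{2}$. But the proposal stops at the point where the real argument begins: you say the compatibility of convex projections with the tensor map ``is the one I expect to be the main obstacle,'' and that obstacle is precisely the content of the theorem, so this is an unfinished proof rather than a complete one.

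There are also two conceptual misplacements worth flagging. First, you transport the problem to the extended Bruhat-Tits building $\mathbf{B}^{e}(G_{K})$ and cite the functoriality of~\cite[5.8.4]{Co14} used in~\ref{sub:4implies4tau}; but the argument lives not in the building but in the CAT(0) cone $\mathbf{F}^{\mathbb{R}}(V_{1}\otimes V_{2})$ of filtrations, and the relevant functoriality is~\cite[5.5.8]{Co14}, which gives the inequality $\left\langle \mathcal{G}_{1}\otimes\mathcal{G}_{2},\Xi\right\rangle \leq\left\langle \mathcal{G}_{1}\otimes\mathcal{G}_{2},p(\Xi)\right\rangle$ for the convex projection $p:\mathbf{F}^{\mathbb{R}}(V_{1}\otimes V_{2})\twoheadrightarrow\mathbf{F}^{\mathbb{R}}(G)$ onto the subcone of \emph{decomposable} filtrations ($G=GL(V_{1})\times GL(V_{2})/\Delta(\mathbb{G}_{m})$). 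Second, the subcone you should project onto is $\mathbf{F}^{\mathbb{R}}(G)$, not the product of the two weakly admissible cones $\mathbf{F}^{\mathbb{R}}(V_{i},\varphi_{i},\mathcal{F}_{i})$: one first projects an arbitrary $\Xi\in\mathbf{F}^{\mathbb{R}}(V,\varphi,\mathcal{F})$ to $p(\Xi)=\Xi_{1}\otimes\Xi_{2}\in\mathbf{F}^{\mathbb{R}}(G)$ (Kempf's filtration), and then one must \emph{prove}, using the explicit formula $\left\langle \mathcal{G}_{1}\otimes\mathcal{G}_{2},\mathcal{H}_{1}\otimes\mathcal{H}_{2}\right\rangle =\left\langle \mathcal{G}_{1},\mathcal{H}_{1}\right\rangle\dim V_{2}+\left\langle \mathcal{G}_{2},\mathcal{H}_{2}\right\rangle\dim V_{1}+\deg\mathcal{G}_{1}\deg\mathcal{H}_{2}+\deg\mathcal{H}_{1}\deg\mathcal{G}_{2}$, the projection inequality above, and Totaro's theorem (to know that $V_{1}\otimes V_{2}$ is weakly admissible), that each $\Xi_{i}$ actually lies in $\mathbf{F}^{\mathbb{R}}(V_{i},\varphi_{i},\mathcal{F}_{i})$. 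Only then can the decomposable case be applied; these two computations are the substance of the proof and they are absent from your proposal. Your ``intertwining of convex projections'' reformulation is a correct consequence of the theorem together with Lemma~\ref{lem:CaractWeaklAdmFiltVectorCase}, but proving it is no easier than the theorem itself and so offers no reduction.
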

\begin{proof}
The general formalism of Harder-Narasimhan filtrations yields the
functoriality of $\mathcal{F}_{F}$ and reduces its compatibility
with tensor products to the following statement: if $(V_{i},\varphi_{i},\mathcal{F}_{i})\in\Fil_{L}^{\Gamma}\Iso(k)^{wa}$
is semi-stable of slope $\mu_{i}\in\mathbb{Q}$ for $i\in\{1,2\}$,
\[
(V,\varphi,\mathcal{F})=(V_{1}\otimes V_{2},\varphi_{1}\otimes\varphi_{2},\mathcal{F}_{1}\otimes\mathcal{F}_{2})\in\Fil_{L}^{\Gamma}\Iso(k)^{wa}
\]
is semi-stable of slope $\mu=\mu_{1}+\mu_{2}$, which means that for
every weakly admissible subobject $X$ of $V$, $\deg(\mathcal{F}_{N}^{\iota}\vert X)\leq\deg(V(\mu)\vert X)$
with equality for $X=V$. Here $V(\nu)$ denotes the filtration on
$V$ such that $\Gr_{V(\nu)}^{\nu}=V$. Exactly as in Lemma~\ref{lem:DefWeakAdmEquScProd},
this condition is itself equivalent to the following one: for every
$\Xi\in\mathbf{F}^{\mathbb{R}}(V,\varphi,\mathcal{F})$, 
\[
\left\langle \mathcal{F}_{N}^{\iota},\Xi\right\rangle \leq\left\langle V(\mu),\Xi\right\rangle \quad(\mbox{or}:\,\left\langle \mathcal{F}_{N}^{\iota},\Xi\right\rangle +\left\langle V(-\mu),\Xi\right\rangle \leq0).
\]
We adapt Totaro's proof to this setting. Applying the functor $\mathbf{F}^{\mathbb{R}}(-)$
to
\[
GL(V_{1})\times GL(V_{2})\twoheadrightarrow G=GL(V_{1})\times GL(V_{2})/\Delta(\mathbb{G}_{m})\hookrightarrow GL(V_{1}\otimes V_{2})
\]
where $\Delta$ is the diagonal embedding yields a factorization
\[
\mathbf{F}^{\mathbb{R}}(V_{1})\times\mathbf{F}^{\mathbb{R}}(V_{2})\twoheadrightarrow\mathbf{F}^{\mathbb{R}}(G)\hookrightarrow\mathbf{F}^{\mathbb{R}}(V_{1}\otimes V_{2})
\]
of the map $(\mathcal{G}_{1},\mathcal{G}_{2})\mapsto\mathcal{G}_{1}\otimes\mathcal{G}_{2}$,
which identifies $\mathbf{F}^{\mathbb{R}}(G)$ with a closed and convex
subset of $\mathbf{F}^{\mathbb{R}}(V_{1}\otimes V_{2})$. Let $p:\mathbf{F}^{\mathbb{R}}(V_{1}\otimes V_{2})\twoheadrightarrow\mathbf{F}^{\mathbb{R}}(G)$
be the convex projection and pick $(\Xi_{1},\Xi_{2})\in\mathbf{F}^{\mathbb{R}}(V_{1})\times\mathbf{F}^{\mathbb{R}}(V_{2})$
mapping to $p(\Xi)\in\mathbf{F}^{\mathbb{R}}(G)$ (this is Kempf's
filtration of \cite[\S 2]{To96}). Since $p$ is $\varphi$-equivariant,
$p(\Xi)$ belongs to $\mathbf{F}^{\mathbb{R}}(G,\varphi)$ and then
also $\Xi_{i}\in\mathbf{F}^{\mathbb{R}}(V_{i},\varphi)$ for $i\in\{1,2\}$.
Moreover, for any $\left(\mathcal{G}_{1},\mathcal{G}_{2}\right)\in\mathbf{F}^{\mathbb{R}}(V_{1,L})\times\mathbf{F}^{\mathbb{R}}(V_{2,L})$,
\begin{equation}
\left\langle \mathcal{G}_{1}\otimes\mathcal{G}_{2},\Xi\right\rangle \leq\left\langle \mathcal{G}_{1}\otimes\mathcal{G}_{2},p(\Xi)\right\rangle \label{eq:projIncreaseScalar}
\end{equation}
by \cite[5.5.12]{Co14} since $p$ is also the restriction to $\mathbf{F}^{\mathbb{R}}(V_{1}\otimes V_{2})$
of the convex projection from $\mathbf{F}^{\mathbb{R}}(V_{1,L}\otimes V_{2,L})$
onto $\mathbf{F}^{\mathbb{R}}(G_{L})$. On the other hand, one checks
easily that
\begin{eqnarray}
\left\langle \mathcal{G}_{1}\otimes\mathcal{G}_{2},\mathcal{H}_{1}\otimes\mathcal{H}_{2}\right\rangle  & = & \left\langle \mathcal{G}_{1},\mathcal{H}_{1}\right\rangle \cdot\dim V_{2}+\left\langle \mathcal{G}_{2},\mathcal{H}_{2}\right\rangle \cdot\dim V_{1}\nonumber \\
 &  & +\deg\mathcal{G}_{1}\cdot\deg\mathcal{H}_{2}+\deg\mathcal{H}_{1}\cdot\deg\mathcal{G}_{2}\label{eq:ScalProdofTensor}
\end{eqnarray}
for every $(\mathcal{G}_{i},\mathcal{H}_{i})\in\mathbf{F}^{\mathbb{R}}(V_{i,L})$.
We will now apply these two formulas to 
\begin{eqnarray*}
\mathcal{F}_{N}^{\iota}(\mbox{on }V) & = & \mathcal{F}_{N}^{\iota}(\mbox{on }V_{1})\otimes\mathcal{F}_{N}^{\iota}(\mbox{on }V_{2}),\\
\mathcal{F}_{H}(\mbox{on }V_{L}) & = & \mathcal{F}_{H}(\mbox{on }V_{1})\otimes\mathcal{F}_{H}(\mbox{on }V_{2}),\\
V(-\mu) & = & V_{1}(-\mu_{1})\otimes V_{2}(-\mu_{2}),\\
p(\Xi) & = & \Xi_{1}\otimes\Xi_{2}.
\end{eqnarray*}
First recall that since $(V,\varphi,\mathcal{F})$ is weakly admissible
(by Totaro's theorem!), 
\[
\left\langle \mathcal{F}_{H},p(\Xi)\right\rangle +\left\langle \mathcal{F}_{N}^{\iota},p(\Xi)\right\rangle \leq0.
\]
Since $\left\langle \mathcal{F}_{H},\Xi\right\rangle +\left\langle \mathcal{F}_{N}^{\iota},\Xi\right\rangle =0$
by assumption on $\Xi$ and Lemma~\ref{lem:CaractWeaklAdmFiltVectorCase},
actually 
\[
\left\langle \mathcal{F}_{H},p(\Xi)\right\rangle +\left\langle \mathcal{F}_{N}^{\iota},p(\Xi)\right\rangle =0
\]
by~(\ref{eq:projIncreaseScalar}). Applying formula (\ref{eq:ScalProdofTensor})
twice and grouping terms, we obtain 
\begin{eqnarray*}
0 & = & \left(\left\langle \mathcal{F}_{H},\Xi_{1}\right\rangle +\left\langle \mathcal{F}_{N}^{\iota},\Xi_{1}\right\rangle \right)\cdot\dim V_{2}\\
 & + & \left(\left\langle \mathcal{F}_{H},\Xi_{2}\right\rangle +\left\langle \mathcal{F}_{N}^{\iota},\Xi_{2}\right\rangle \right)\cdot\dim V_{1}\\
 & + & \left(\deg(\mathcal{F}_{H}\mbox{ on }V_{1})+\deg(\mathcal{F}_{N}^{\iota}\,\mbox{on }V_{1})\right)\cdot\deg\Xi_{2}\\
 & + & \left(\deg(\mathcal{F}_{H}\mbox{ on }V_{2})+\deg(\mathcal{F}_{N}^{\iota}\,\mbox{on }V_{2})\right)\cdot\deg\Xi_{1}.
\end{eqnarray*}
Since $V_{1}$ and $V_{2}$ are weakly admissible, the first two terms
are $\leq0$ and the last two trivial. Thus $\left\langle \mathcal{F}_{H},\Xi_{i}\right\rangle +\left\langle \mathcal{F}_{N}^{\iota},\Xi_{i}\right\rangle =0$
for $i\in\{1,2\}$, i.e.~$\Xi_{i}\in\mathbf{F}^{\mathbb{R}}(V_{i},\varphi_{i},\mathcal{F}_{i})$.
Now applying our two formulas (\ref{eq:projIncreaseScalar}) and (\ref{eq:ScalProdofTensor})
twice again also gives that
\begin{eqnarray*}
\left\langle \mathcal{F}_{N}^{\iota},\Xi\right\rangle +\left\langle V(-\mu),\Xi\right\rangle  & \leq & \left\langle \mathcal{F}_{N}^{\iota},p(\Xi)\right\rangle +\left\langle V(-\mu),p(\Xi)\right\rangle \\
 & \leq & \left(\left\langle \mathcal{F}_{N}^{\iota},\Xi_{1}\right\rangle +\left\langle V_{1}(-\mu_{1}),\Xi_{1}\right\rangle \right)\cdot\dim V_{2}\\
 & + & \left(\left\langle \mathcal{F}_{N}^{\iota},\Xi_{2}\right\rangle +\left\langle V_{2}(-\mu_{1}),\Xi_{2}\right\rangle \right)\cdot\dim V_{1}\\
 & + & \left(\deg(\mathcal{F}_{N}^{\iota}\mbox{ on }V_{1})+\deg(V_{1}(-\mu_{1}))\right)\cdot\deg\Xi_{2}\\
 & + & \left(\deg(\mathcal{F}_{N}^{\iota}\mbox{ on }V_{2})+\deg(V_{2}(-\mu_{2}))\right)\cdot\deg\Xi_{1}.
\end{eqnarray*}
Since $V_{i}$ is semi-stable of slope $\mu_{i}$ for $i\in\{1,2\}$,
the first two terms are $\leq0$ and the last two trivial. Thus~$\left\langle \mathcal{F}_{N}^{\iota},\Xi\right\rangle +\left\langle V(-\mu),\Xi\right\rangle \leq0$
for all $\Xi\in\mathbf{F}^{\mathbb{R}}(V,\varphi,\mathcal{F})$ and
$(V,\varphi,\mathcal{F})$ is indeed semi-stable of slope $\mu$.
\end{proof}

\subsection{~}

A weakly admissible $\Gamma$-filtered $G$-isocrystal $(\varphi,\mathcal{F})$
over $L$ thus again yields four filtrations: the \emph{Hodge }$\Gamma$-filtration
$\mathcal{F}_{H}=\mathcal{F}\in\mathbf{F}^{\Gamma}(G_{L})$, the opposed
\emph{Newton} $\mathbb{Q}$-filtrations $\mathcal{F}_{N},\mathcal{F}_{N}^{\iota}\in\mathbf{F}^{\mathbb{Q}}(G_{K},\varphi)$
and the Fargues $\mathbb{Q}$-filtration $\mathcal{F}_{F}\in\mathbf{F}^{\mathbb{Q}}(G_{K},\varphi,\mathcal{F})$,
which is obtained by composing the exact $\otimes$-functor $(\varphi,\mathcal{F})$
with the $\otimes$-functor of the previous proposition (the resulting
$\otimes$-functor is exact because it is plainly additive and every
exact sequence in $\Rep(G)$ is split). The former Harder-Narasimhan
filtration $\mathcal{F}_{HN}$ of section~\ref{sec:EquivForWeaklyAdm}
is trivial. We claim that, just as in the $GL(V)$-case:
\begin{prop}
\label{prop:FarguesIsConvProj}The Fargues $\mathbb{Q}$-filtration
$\mathcal{F}_{F}$ is the convex projection of $\mathcal{F}_{N}^{\iota}$.
\end{prop}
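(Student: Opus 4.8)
The plan is to reduce the assertion to its $GL$-case, namely the last claim of Lemma~\ref{lem:CaractWeaklAdmFiltVectorCase}, by pulling everything back along the faithful representation $\rho$. Since $\rho$ is faithful, $G\hookrightarrow GL(V(\rho))$ is a closed immersion, so, exactly as in the proof of Theorem~\ref{thm:TensorProductFargues} (and \cite{Co14}), applying $\mathbf{F}^{\mathbb{R}}(-)$ identifies $\mathbf{F}^{\mathbb{R}}(G_K)$ with a closed convex subset of $\mathbf{F}^{\mathbb{R}}(V_K(\rho))$; moreover, because $d_\rho$ was defined so that $\langle\mathcal{F}_1,\mathcal{F}_2\rangle_\rho=\langle\mathcal{F}_1(\rho),\mathcal{F}_2(\rho)\rangle$, the embedding $\Xi\mapsto\Xi(\rho)$ is isometric. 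It carries $\mathbf{F}^{\mathbb{R}}(G_K,\varphi)$ into $\mathbf{F}^{\mathbb{R}}(V_K(\rho),\varphi(\rho))$ (immediate from the definition of $\varphi$-stability) and $\mathbf{F}^{\mathbb{R}}(G_K,\varphi,\mathcal{F})$ into $\mathbf{F}^{\mathbb{R}}(V_K(\rho),\varphi(\rho),\mathcal{F}(\rho))$ (by the metric descriptions of these sets in Lemmas~\ref{lem:CaractWeaklAdmFiltVectorCase} and~\ref{lem:CaractWeakAdmFiltrG}, using that $(V_K(\rho),\varphi(\rho),\mathcal{F}(\rho))$ is weakly admissible). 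Since $\mathbf{F}^{\mathbb{R}}(G_K,\varphi,\mathcal{F})$ is closed convex in $(\mathbf{F}^{\mathbb{R}}(G_K),d_\rho)$ which is in turn closed convex in $\mathbf{F}^{\mathbb{R}}(V_K(\rho))$, its image $S$ is a closed convex subset of $\mathbf{F}^{\mathbb{R}}(V_K(\rho))$.

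Next I would pin down the two filtrations in question after evaluation at $\rho$. By functoriality of the slope $\mathbb{Q}$-graduation $\nu$, $\mathcal{F}_N^\iota(\rho)$ is the $\iota$-Newton filtration of the isocrystal $(V_K(\rho),\varphi(\rho))$. And since $\mathcal{F}_F$ on the $G$-side is, by its very definition, the composite of the exact $\otimes$-functor $(\varphi,\mathcal{F})\colon\Rep(G)\to\Fil_L^{\Gamma}\Iso(k)^{wa}$ with the Fargues $\otimes$-functor of Theorem~\ref{thm:TensorProductFargues}, its value $\mathcal{F}_F(\rho)$ is the Fargues filtration of the object $(V_K(\rho),\varphi(\rho),\mathcal{F}(\rho))$. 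Lemma~\ref{lem:CaractWeaklAdmFiltVectorCase} then says precisely that $\mathcal{F}_F(\rho)$ is the nearest point to $\mathcal{F}_N^\iota(\rho)$ in the closed convex set $\mathbf{F}^{\mathbb{R}}(V_K(\rho),\varphi(\rho),\mathcal{F}(\rho))$.

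To finish I would invoke the elementary fact that in a complete CAT(0)-space, if $y$ is the nearest point to $x$ in a non-empty closed convex set $T$ and $y$ moreover lies in a closed convex subset $S\subset T$, then $y$ is also the nearest point to $x$ in $S$ (indeed $\min_{z\in S}d(x,z)\le d(x,y)=\min_{z\in T}d(x,z)\le\min_{z\in S}d(x,z)$ forces equality throughout, and the minimiser over $S$ is unique). Apply this with $x=\mathcal{F}_N^\iota(\rho)$, $T=\mathbf{F}^{\mathbb{R}}(V_K(\rho),\varphi(\rho),\mathcal{F}(\rho))$, $S$ the image of $\mathbf{F}^{\mathbb{R}}(G_K,\varphi,\mathcal{F})$ as above, and $y=\mathcal{F}_F(\rho)$: the crucial point is that $\mathcal{F}_F$ genuinely \emph{is} a $G$-filtration, which is exactly what Theorem~\ref{thm:TensorProductFargues} supplies, so that $y\in S$. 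Hence $\mathcal{F}_F(\rho)$ is the nearest point to $\mathcal{F}_N^\iota(\rho)$ in $S$; since a nearest-point projection onto a fixed non-empty closed convex set is determined purely metrically, it is preserved by the isometric embedding $\Xi\mapsto\Xi(\rho)$, which also sends $\mathcal{F}_N^\iota$ to $\mathcal{F}_N^\iota(\rho)$. Transporting back, $\mathcal{F}_F=p(\mathcal{F}_N^\iota)$, where $p\colon\mathbf{F}^{\mathbb{R}}(G_K,\varphi)\to\mathbf{F}^{\mathbb{R}}(G_K,\varphi,\mathcal{F})$ is the convex projection.

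The main — and essentially the only substantial — obstacle has already been overcome in Theorem~\ref{thm:TensorProductFargues}: the $\otimes$-compatibility of the Fargues filtration, resting on Totaro's theorem \cite{To96}, is precisely what makes $\mathcal{F}_F$ a $G$-filtration and so places $\mathcal{F}_F(\rho)$ inside the small convex set $S$ rather than merely in $T$. Everything else is bookkeeping with the explicit descriptions of $\mathbf{F}^{\mathbb{R}}(G_K,\varphi,\mathcal{F})$ in Lemmas~\ref{lem:CaractWeaklAdmFiltVectorCase} and~\ref{lem:CaractWeakAdmFiltrG} and with the functoriality of $\mathbf{F}^{\mathbb{R}}(-)$ and of convex projections in CAT(0)-spaces, as in \cite{Co14}.
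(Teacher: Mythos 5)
Your proof is correct, but it follows a genuinely different and in fact more economical route than the paper. The paper introduces the convex projection $p\colon\mathbf{F}^{\mathbb{R}}(V_K(\rho))\twoheadrightarrow\mathbf{F}^{\mathbb{R}}(G_K)$, invokes the scalar-product inequality $\langle\mathcal{H}(\rho),\mathcal{G}\rangle\le\langle\mathcal{H},p(\mathcal{G})\rangle_\rho$ of \cite[5.5.8]{Co14}, shows that this $p$ carries $\mathbf{F}^{\mathbb{R}}(V_K(\rho),\varphi(\rho),\mathcal{F}(\rho))$ into $\mathbf{F}^{\mathbb{R}}(G_K,\varphi,\mathcal{F})$, and then uses the non-expanding property of $p$ to compare distances; the conclusion is that the candidate projection $\mathcal{F}'_F$ agrees with $\mathcal{F}_F$ after evaluation at $\rho$. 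You instead bypass the auxiliary projection $p$ and the scalar-product inequality entirely: you observe that, by the isometric closed convex embedding $\Xi\mapsto\Xi(\rho)$ and Lemma~\ref{lem:CaractWeaklAdmFiltVectorCase}, $\mathcal{F}_F(\rho)$ is already the nearest point to $\mathcal{F}_N^\iota(\rho)$ in the larger set $T=\mathbf{F}^{\mathbb{R}}(V_K(\rho),\varphi(\rho),\mathcal{F}(\rho))$, and that this point actually lies in the smaller closed convex set $S$ (the $\rho$-image of $\mathbf{F}^{\mathbb{R}}(G_K,\varphi,\mathcal{F})$) precisely because Theorem~\ref{thm:TensorProductFargues} makes $\mathcal{F}_F$ a bona fide $G$-filtration by $\varphi$-stable weakly admissible subspaces. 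Then the elementary CAT(0) fact --- a nearest point in $T$ belonging to $S\subset T$ is automatically the nearest point in $S$ --- finishes the argument. Both proofs lean on Theorem~\ref{thm:TensorProductFargues} to even make sense of $\mathcal{F}_F$ as a $G$-filtration, but you extract more mileage from it, turning it into the entire geometric content of the proof and dispensing with the technical input from \cite[5.5.8]{Co14}. The one small thing worth making explicit is the uniqueness step: a priori the minimizer over the possibly non-convex intersection $S$ need not be unique, but any minimizer over $S$ is also a minimizer over $T$, so uniqueness is inherited from the CAT(0) projection onto the convex set $T$; you implicitly use this but could say it in a half-line.
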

\begin{proof}
The map $\mathcal{G}\mapsto\mathcal{G}(\tau)$ identifies $\mathbf{F}^{\mathbb{R}}(G_{K})$
with a $\varphi$-stable closed convex subset of $\mathbf{F}^{\mathbb{R}}(V_{K}(\tau))$.
Let $p$ be the convex projection from $\mathbf{F}^{\mathbb{R}}(V_{K}(\tau))$
to $\mathbf{F}^{\mathbb{R}}(G_{K})$. It is $\varphi$-equivariant
and thus maps $\mathbf{F}^{\mathbb{R}}(V_{K}(\tau),\varphi(\tau))$
to $\mathbf{F}^{\mathbb{R}}(G_{K},\varphi)$. Moreover, 
\[
\left\langle \mathcal{H}(\tau),\mathcal{G}\right\rangle \leq\left\langle \mathcal{H}(\tau),p(\mathcal{G})(\tau)\right\rangle =\left\langle \mathcal{H},p(\mathcal{G})\right\rangle _{\tau}
\]
for every $\mathcal{H}\in\mathbf{F}^{\mathbb{R}}(G_{K})$ (or $\mathbf{F}^{\mathbb{R}}(G_{L})$)
and $\mathcal{G}\in\mathbf{F}^{\mathbb{R}}(V_{K}(\tau))$, exactly
as in the proof of the previous proposition. Suppose now that $\mathcal{G}$
belongs to $\mathbf{F}^{\mathbb{R}}(V_{K}(\tau),\varphi(\tau),\mathcal{F}(\tau))$.
Then $p(\mathcal{G})$ also belongs to $\mathbf{F}^{\mathbb{R}}(G_{K},\varphi,\mathcal{F})$.
Indeed, since $(\varphi,\mathcal{F}$) is weakly admissible, 
\[
\left\langle \mathcal{F}_{H},p(\mathcal{G})\right\rangle _{\tau}+\left\langle \mathcal{F}_{N}^{\iota},p(\mathcal{G})\right\rangle _{\tau}\leq0
\]
but also $\left\langle \mathcal{F}_{H}(\tau),\mathcal{G}\right\rangle +\left\langle \mathcal{F}_{N}^{\iota}(\tau),\mathcal{G}\right\rangle =0$
thus actually
\[
\left\langle \mathcal{F}_{H},p(\mathcal{G})\right\rangle _{\tau}+\left\langle \mathcal{F}_{N}^{\iota},p(\mathcal{G})\right\rangle _{\tau}=0
\]
i.e.~$p(\mathcal{G})$ belongs to $\mathbf{F}^{\mathbb{R}}(G_{K},\varphi,\mathcal{F})$.
Since $p$ is non-expanding and $\mathcal{F}_{N}^{\iota}\in\mathbf{F}^{\mathbb{R}}(G_{K})$,
{\small{}
\[
d\left(\mathcal{F}_{N}^{\iota}(\tau),\mathcal{G}\right)\geq d\left(\mathcal{F}_{N}^{\iota}(\tau),p(\mathcal{G})(\tau)\right)=d_{\tau}\left(\mathcal{F}_{N}^{\iota},p(\mathcal{G})\right)\geq d_{\tau}(\mathcal{F}_{N}^{\iota},\mathcal{F}'_{F})=d(\mathcal{F}_{N}^{\iota}(\tau),\mathcal{F}'_{F}(\tau))
\]
}where $\mathcal{F}'_{F}$ is the convex projection of $\mathcal{F}_{N}^{\iota}\in\mathbf{F}^{\mathbb{R}}(G_{K},\varphi)$
to $\mathbf{F}^{\mathbb{R}}(G_{K},\varphi,\mathcal{F})$. Since this
holds true for any $\mathcal{G}\in\mathbf{F}^{\mathbb{R}}(V_{K}(\tau),\varphi(\tau),\mathcal{F}(\tau))$,
it follows that $\mathcal{F}'_{F}(\tau)=\mathcal{F}_{F}(\tau)$ by
Lemma~\ref{lem:CaractWeaklAdmFiltVectorCase}. But then $\mathcal{F}'_{F}=\mathcal{F}_{F}$
since $\mathbf{F}^{\mathbb{R}}(G_{K})\hookrightarrow\mathbf{F}^{\mathbb{R}}(V_{K}(\tau))$
is injective.
\end{proof}
\bibliographystyle{plain}
\bibliography{MyBib}

\end{document}